\newtheorem{theorem}{Theorem}[section]
\newtheorem{lemma}[theorem]{Lemma}
\newtheorem{proposition}[theorem]{Proposition}
\theoremstyle{definition}
\newtheorem{definition}[theorem]{Definition}
\newtheorem{example}[theorem]{Example}
\theoremstyle{remark}
\newtheorem{remark}[theorem]{Remark}
\newtheorem{problem}[theorem]{Problem}
\title{Spanning Factorizations in Vertex-Transitive Digraphs of Degree~2}
\author{Vance Faber\\Hoquiam, WA}
\date{}
\begin{document}

\maketitle

\begin{abstract}
We investigate the existence of spanning 1-factorizations in vertex-transitive digraphs of out-degree $d$. The open question is whether every such digraph
admits a spanning 1-factorization that includes, for each vertex $v$, all $d$
out-edges $(v,F_i(v))$ from $v$. 

This paper focuses on the case $d=2$. Using the structure of alternating cycles
and block systems, we develop a block/phase framework that yields sufficient
conditions for including both $F_1,F_2$. We show that certain block obstructions
can prevent their simultaneous inclusion, while sharply transitive sets (and
hence spanning 1-factorizations) always exist. Our results provide general
constraints on feasible block sizes, describe the role of phase distributions,
and illustrate the theory with concrete families, including coset digraphs on
$A_5$. The necessity of the block criterion remains open, even in degree~2.
\end{abstract}

\section{Introduction}
The central problem that this paper addresses is to understand spanning 1-factorizations in vertex-transitive digraphs of out-degree $d$. A spanning 1-factorization is a
1-factorization together with a set of words $W$ in the factors such that, from
any root vertex $v_0$, the evaluation $\{w(v_0):w\in W\}$ covers the entire
vertex set without collision. If $W$ contains the empty word and all $d$
generators $F_1,\dots,F_d$, then every out-edge is incorporated at every vertex,
making the factorization particularly natural for routing and algebraic
purposes.

Our main results concern the case $d=2$, where the problem already exhibits rich
structure and subtle obstructions. The paper is organized as follows:
\begin{enumerate}[label=(\arabic*)]
  \item \textbf{Preliminaries.} We recall the Cayley--coset representation of
  vertex-transitive digraphs (Sabidussi’s theorem), introduce covering groups
  generated by 1-factors, and review alternating cycles.

  \item \textbf{Block/phase framework.} We construct position blocks from the
  alternating cycle structure, define tied refinements, and analyze atoms via
  phase counts. This leads to a classification of $C$-invariant refinements and
  a block-derangement criterion for including $F_1,F_2$.

  \item \textbf{Existence results.} We prove that sharply transitive sets always
  exist, guaranteeing spanning factorizations in all cases. The block criterion
  provides a sufficient condition for including both $F_1,F_2$.

  \item \textbf{Examples.} We test the framework on explicit families, including
  a simple toy model and coset digraphs of $A_5$. These illustrate both the
  obstruction and the existence of partial relocatable trees.

  \item \textbf{Open problems.} The necessity of the block criterion is still
  unresolved, even in degree~2. For general degree $d$, the overarching
  question is whether every vertex-transitive digraph admits a spanning
  1-factorization containing $\{\emptyset,F_1,\dots,F_d\}$.
\end{enumerate}

\section{Background and Definitions}

A digraph $D=(V,E)$ is \emph{regular of degree $d$} if every vertex has in-degree 
and out-degree equal to $d$. Throughout, we will restrict attention to strongly connected digraphs - 
there is a directed path between any two vertices. 

\begin{definition}[1-factors and factorizations]
A \emph{factor} of a digraph $D$ is a function $F:V\to V$ such that $(v,F(v))\in E$ 
for all $v\in V$. Equivalently, a factor may be identified with a spanning subgraph 
in which each vertex has out-degree one. A factor is a \emph{1-factor} if it is 
one-to-one, so that each vertex also has in-degree one. A \emph{factorization} of 
a regular digraph $D$ of degree $d$ is a disjoint partition of its edge set into 
$d$ factors $F_1,\dots,F_d$. If each factor is a 1-factor, the decomposition is a 
\emph{1-factorization}. By Petersen’s theorem \cite{Petersen1891}, every regular digraph admits a 
1-factorization.
\end{definition}

Each 1-factor $F_i$ is a disjoint union of directed cycles covering $V$. If 
\[
w = F_{i_k} F_{i_{k-1}} \cdots F_{i_1},
\]
 is a word in the alphabet $\{F_1,\dots,F_d\}$, 
then for a vertex $v$ the action $w(v)$ denotes the endpoint of the path obtained 
by successively applying these factors to $v$ from right to left. 

\begin{definition}[Spanning 1-factorization]
A \emph{spanning 1-factorization \cite{DoughertyFaber2020}} is a 1-factorization 
$\{F_1,\dots,F_d\}$ together with a set of words $W$ in $F=\{F_1,\dots,F_d\}$ containing 
the empty word such that for any $v_0\in V$, the set 
\[
\{\, w(v_0) : w\in W \,\}
\]
equals $V$, and each vertex of $V$ is reached by a unique word in $W$.
\end{definition}

This definition encodes a system of directed paths from any vertex to all other vertices. In network routing, 
such a spanning factorization corresponds to an “all-to-all” communication scheme 
where each vertex receives a unique message from every other vertex.

\medskip

The class of graphs we study are \emph{vertex-transitive digraphs}. 
A digraph $D$ is vertex-transitive if for every pair $u,v\in V$ there exists an 
automorphism $\varphi\in \operatorname{Aut}(D)$ with $\varphi(u)=v$. A fundamental 
characterization due to Sabidussi \cite{Sabidussi1959} shows that every vertex-transitive digraph can 
be represented in terms of a group action. We specialize that theorem to connected digraphs.

\begin{definition}[Cayley coset digraph]\label{def:cayley-coset}
Let $G$ be a finite group, $H\leq G$ a subgroup, and $S\subseteq G$ . Suppose
\begin{enumerate}[label=(\arabic*)]
    \item $S\cap H=\varnothing$,
    \item $HSH = SH$,
    \item $S$ contains exactly one representative from each coset in $SH$,
    \item $G = \langle S \rangle H$.
\end{enumerate}
Then the \emph{connected Cayley coset digraph} $D=D(G,S,H)$ has vertex set $\{gH : g\in G\}$ 
and edge set $\{(gH,gsH) : g\in G,\, s\in S\}$. When $H=\{1\}$, this reduces to a connected
Cayley digraph.
\end{definition}

\begin{theorem}[Sabidussi]
A connected digraph $D$ is vertex-transitive if and only if it is isomorphic to a connected Cayley 
coset digraph $D(G,S,H)$, with $G=\operatorname{Aut}(D)$, $H$ the stabilizer of a 
vertex, and $S$ corresponding to automorphisms mapping a fixed vertex to its 
neighbors.
\end{theorem}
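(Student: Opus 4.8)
The plan is to prove the two implications separately, with the forward (representation) direction carrying essentially all of the work, since the backward direction is a direct verification.

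For the easy direction, suppose $D\cong D(G,S,H)$ with $S,H$ satisfying conditions (1)--(4). I would exhibit a transitive group of automorphisms explicitly: for each $x\in G$ define the left translation $\lambda_x(gH)=xgH$ on the vertex set $\{gH\}$. First I would check that $\lambda_x$ is a well-defined bijection (it is left multiplication on cosets, with inverse $\lambda_{x^{-1}}$), and that it carries edges to edges, since $\lambda_x$ sends $(gH,gsH)$ to $\bigl(xgH,(xg)sH\bigr)$, which is again of the prescribed form. Hence $\lambda_x\in\operatorname{Aut}(D)$, and because $\lambda_{g'g^{-1}}(gH)=g'H$, the action is transitive, so $D$ is vertex-transitive. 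The one subtlety worth flagging is that the edge set of $D(G,S,H)$ must be independent of coset representatives; here I would invoke conditions (2) and (3), which together guarantee that left multiplication by any $h\in H$ merely permutes the cosets $\{sH:s\in S\}$, so that the out-neighbor set $\{gsH:s\in S\}$ depends only on the coset $gH$.

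For the hard direction, let $G=\operatorname{Aut}(D)$, fix a base vertex $v_0$, and set $H=\operatorname{Stab}_G(v_0)$. By vertex-transitivity the orbit of $v_0$ is all of $V$, so the orbit--stabilizer correspondence $\phi\colon gH\mapsto g(v_0)$ is a well-defined bijection from $G/H$ to $V$. To recover the edges I would choose $S$ as follows: for each out-neighbor $u$ of $v_0$, pick one $s\in G$ with $s(v_0)=u$, and collect these representatives into $S$. I then verify the four defining conditions. Condition (1) holds because out-neighbors differ from $v_0$ (no loops), so no chosen $s$ fixes $v_0$; condition (3) holds because distinct out-neighbors give distinct cosets $sH$ and I took exactly one representative of each; condition (2) is the assertion that every $h\in H$, being an automorphism fixing $v_0$, permutes the out-neighbors of $v_0$, which I would translate into the coset identity $HSH=SH$; and condition (4) is the translation of strong connectivity. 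For the last point I would use finiteness: reachability from $v_0$ along directed edges gives exactly $\langle S\rangle\!\cdot v_0$, and this equals $V$ precisely when $\langle S\rangle$ acts transitively on $G/H$, i.e.\ when $\langle S\rangle H=G$.

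Finally I would confirm that $\phi$ is a digraph isomorphism. The cleanest route is to note that, for $g\in G$ and $s\in S$, the automorphism $g$ sends the edge $(v_0,s(v_0))$ to $\bigl(g(v_0),(gs)(v_0)\bigr)$, which under $\phi$ is exactly the pair $(gH,gsH)$; conversely every out-edge at $g(v_0)$ arises this way because $g$ maps the out-neighborhood of $v_0$ bijectively onto that of $g(v_0)$. I expect the main obstacle to be the bookkeeping around well-definedness: since $H$ may act nontrivially on the out-neighbors of $v_0$, the individual assignment $s\mapsto$ out-neighbor is canonical only up to the $H$-action, and one must show that the edge set $\{(gH,gsH)\}$ nonetheless depends neither on the representative $g$ of the coset nor on the chosen transversal $S$. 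This is precisely the content packaged into condition (2), so the crux is to prove the equivalence ``$H$ permutes $N^+(v_0)$'' $\Longleftrightarrow$ ``$HSH=SH$'' carefully; once that is in hand, the edge correspondence and the connectivity condition (4) follow routinely.
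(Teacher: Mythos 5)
Your proposal is correct, and it is the standard proof of Sabidussi's theorem. Note, however, that the paper itself offers no proof of this statement: it is recorded as a known result with a citation to Sabidussi (1959), so there is no in-paper argument to compare against. Your outline supplies exactly the canonical argument from the literature: in the easy direction, left translations $\lambda_x(gH)=xgH$ form a transitive group of automorphisms, with condition (2) ($HSH=SH$) ensuring the out-neighbor set $\{gsH:s\in S\}$ depends only on the coset $gH$ and not on the representative; in the hard direction, the orbit--stabilizer bijection $gH\mapsto g(v_0)$ transports the digraph structure, with $S$ a transversal of the cosets corresponding to the out-neighbors of $v_0$. You also correctly identify and resolve the two genuine subtleties. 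First, condition (2) is precisely the coset-theoretic translation of the fact that every automorphism fixing $v_0$ permutes $N^+(v_0)$; your verification ($hsk(v_0)=h(s(v_0))\in N^+(v_0)$, hence $hsk\in s'H\subseteq SH$, with the reverse inclusion from $1\in H$) is sound. Second, directed reachability from $v_0$ yields a priori only the \emph{positive} words in $S$ applied to $v_0$, and your appeal to finiteness (so that the monoid generated by $S$ equals the group $\langle S\rangle$, since each $s$ has finite order) is exactly what is needed to convert strong connectivity into $G=\langle S\rangle H$. The only implicit hypothesis worth stating explicitly is that $D$ is loopless, which is what makes condition (1), $S\cap H=\varnothing$, come out correctly; you flag this in passing, and it is consistent with the paper's standing conventions.
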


\section{Minimal Representations}

\paragraph{Correction.}
In an earlier version of this paper we incorrectly claimed that every vertex-transitive
digraph of degree~2 admits a Cayley--coset presentation in which the stabilizer acts
faithfully on the generator set. The intended argument was to factor out the kernel of the
local action of $H$ on $S$. However, this is not generally correct: the kernel need not be
normal in $G$, so the quotient construction can fail.

\begin{example}[Counterexample (due to Dave Morris)]
Let $A=\langle a_1\rangle\times\langle a_2\rangle\times\langle a_3\rangle \cong C_2^3$,
$B=\langle b\rangle \cong C_3$, and let $b$ act on $A$ by cycling the generators:
$b a_i b^{-1} = a_{i+1}$ (indices mod $3$).
Set $G=A\rtimes B$, $S=\{b,\,a_2 b\}$, and $H=\langle a_2,a_3\rangle$.
Then $HSH=SH$, so this defines a valid Cayley--coset presentation.
The local action map $\alpha:H\to \mathrm{Sym}(S)\cong S_2$ has kernel $\langle a_3\rangle$,
but $b a_3 b^{-1}=a_1\notin \langle a_3\rangle$, so the kernel is not normal in $G$.
Thus the quotient construction used in the earlier version does not apply.
The resulting digraph has $|G:H|=6$ vertices and out-degree $2$, and is in fact
isomorphic to a Cayley graph on $S_3$.
\end{example}

This shows there is no universal reduction to ``faithful local action'' by quotienting.
Instead, for degree~2 we record a harmless normalization that we will use throughout.

\subsection*{Normalization in degree 2}

Let $D=D(G,S,H)$ be a connected Cayley--coset digraph with $|S|=2$,
$S=\{s,t\}$, $S\cap H=\varnothing$, $HSH=SH$, and $G=\langle S\rangle H$.
Write $SH=\{sH,\,tH\}$.

\begin{proposition}[Degree-2 normalization]\label{prop:deg2-normalization}
Suppose there exists $h\in H$ with $hsH\neq sH$ (equivalently, $hsH=tH$).
Then there is $k\in H$ such that $hs=tk$, and replacing $S$ by
\[
S'=\{\,s,\ hs\,\}=\{\,s,\ tk\,\}
\]
does not change the digraph: $D(G,S,H)=D(G,S',H)$. In particular, in this case
we may (and do) assume
\[
\boxed{\quad S=\{s,\ hs\}\ \text{with}\ hsH\neq sH\quad}
\]
and every vertex $gH$ has out-edges $gH\to gsH$ and $gH\to ghsH$.
\end{proposition}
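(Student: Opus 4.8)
The plan is to reduce everything to a single observation: the out-neighbors of a vertex $gH$ depend only on the left cosets $sH$ and $tH$, not on the chosen representatives, so swapping the representative $t$ for any element of its coset leaves the digraph untouched.

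First I would produce the element $k$. Since $h\in H$ and $s\in S$, we have $hs\in HS\subseteq HSH=SH=sH\cup tH$. The hypothesis $hsH\neq sH$ therefore forces $hsH=tH$, i.e.\ $hs\in tH$, so $hs=tk$ for some $k\in H$. The same computation shows $hs\notin H$ (as $tH\neq H$ because $t\notin H$), a fact I will need for axiom~(1).

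Next I would compare the two objects directly as directed graphs on the common vertex set $\{gH:g\in G\}$. The out-neighbors of $gH$ under $S'=\{s,hs\}$ are $gsH$ and $g(hs)H$; but $hsH=tH$ gives $g(hs)H=gtH$, so they coincide with the out-neighbors $gsH,gtH$ of $gH$ under $S$. Hence $D(G,S,H)$ and $D(G,S',H)$ have identical edge sets and are literally the same digraph. It then remains only to check that $S'$ is an admissible generating set, i.e.\ that it satisfies conditions (1)--(4) of \Cref{def:cayley-coset}. Conditions (1)--(3) are short: $s,hs\notin H$; $S'H=sH\cup hsH=sH\cup tH=SH$; for (2) one has $HS'H=HsH\cup H(hs)H=HsH$ (absorbing $h\in H$ on the left), and since the double coset $HsH$ contains both $s$ and $t=hsk^{-1}$ while lying inside $SH$, it equals $sH\cup tH=S'H$; finally $S'=\{s,hs\}$ contributes exactly one representative to each of the two cosets $sH,tH$ comprising $S'H$.

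The one step that needs genuine care is condition~(4), $G=\langle S'\rangle H$, because passing from $S$ to $S'$ changes the generated subgroup ($\langle s,hs\rangle$ need not equal $\langle s,t\rangle$); only the cosets are preserved. Here I would invoke connectivity: for a finite coset digraph the set of vertices reachable from the base coset $H$ equals $\langle S'\rangle H/H$ (in a finite group the submonoid generated by $S'$ is already the subgroup $\langle S'\rangle$), so strong connectivity is equivalent to $\langle S'\rangle H=G$. Since $D(G,S',H)$ is the very same digraph as $D(G,S,H)$, which is strongly connected by our standing assumption, we conclude $\langle S'\rangle H=G$. (Equivalently, one can argue directly that $t=(hs)k^{-1}\in\langle S'\rangle H$ and $s\in\langle S'\rangle$, then propagate along words.) With all four conditions verified and the edge sets shown equal, the boxed normalization $S=\{s,hs\}$ follows, and every vertex $gH$ has the stated out-edges $gH\to gsH$ and $gH\to ghsH$.
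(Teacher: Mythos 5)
Your proposal is correct, and its core is the same as the paper's: from $hs\in HSH=SH$ and $hsH\neq sH$ deduce $hsH=tH$, write $hs=tk$ with $k\in H$, and observe that edges depend only on the right cosets $sH,tH$, so swapping the representative $t$ for $tk=hs$ leaves the edge set untouched (this is exactly the content of Remark~\ref{rem:rightH}). Where you go beyond the paper is in verifying that $S'=\{s,hs\}$ is itself an admissible generating set in the sense of Definition~\ref{def:cayley-coset}: the paper's proof stops at the equality of edge sets and never checks conditions (1)--(4) for $S'$. Your checks of (1)--(3) are correct (in particular the argument that $HS'H=HsH$ is a union of right cosets containing $s$ and $t=hsk^{-1}$ inside $sH\cup tH$, hence equals $S'H$), and you correctly isolate (4) as the only nontrivial point, since $\langle s,hs\rangle$ need not coincide with $\langle s,t\rangle$; your resolution via strong connectivity --- the reachable set from $H$ equals $\langle S'\rangle H/H$ because, in a finite group, the submonoid generated by $S'$ is already $\langle S'\rangle$, and absorption $HS'H=S'H$ keeps stray $H$-factors from escaping the coset bookkeeping --- is sound. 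What each approach buys: the paper's two-line proof captures the essential invariance and is all that is needed if one reads $D(G,S',H)$ purely as a vertex-and-edge-set formula; your longer argument closes the implicit gap that the normalized pair must still satisfy the standing conventions (notably $HS'H=S'H$ and $G=\langle S'\rangle H$), which the paper does rely on later when it works ``uniformly'' with the normalized presentation. So your proof is a strictly more careful version of the same argument rather than a different one.
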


\begin{proof}
If $hsH\neq sH$ and $|S|=2$, then necessarily $hsH=tH$. Since two right cosets
are equal iff their representatives differ by an element of $H$, there exists $k\in H$
with $hs=tk$. Replacing $t$ by $tk$ leaves the right coset $tH$ unchanged, hence
leaves all edges $gH\to gtH$ unchanged. Thus $S$ and $S'=\{s,hs\}$ define the same
edge set. The stated edge description follows.
\end{proof}

\begin{remark}[When no swapping element exists]\label{rem:no-swap}
If \emph{no} $h\in H$ satisfies $hsH\neq sH$, then $h s H = sH$ for all $h\in H$, and
likewise $h t H = tH$ for all $h\in H$. In this case the presentation cannot be rewritten
in the form $\{s,hs\}$, but all of our later constructions (covering groups, alternating
cycles, block/phase framework) depend only on the two $1$-factors $F_1,F_2$ and
apply verbatim. Thus Proposition~\ref{prop:deg2-normalization} is a convenient
normal form when a swapping element exists; it is not required for any of our
structural results.
\end{remark}

\begin{remark}[Equivalence under right-$H$ adjustment]\label{rem:rightH}
More generally, replacing any generator $u\in S$ by $u k$ with $k\in H$ does not change
the coset $uH$ nor any edge. Hence the data that determine $D(G,S,H)$ in degree~2
are exactly the two right cosets $\{sH, tH\}\subseteq SH$, not the specific representatives.
\end{remark}

\medskip
We will use Proposition~\ref{prop:deg2-normalization} to write edges uniformly as
$gH\to gsH$ and $gH\to ghsH$ whenever a swapping element $h$ exists. The correction
above (and the open problem recorded later) addresses only the failed quotient-to-faithful
reduction; none of our subsequent arguments rely on $\lvert H\rvert=2$.

\section{Preliminaries}\label{sec:prelim}

\subsection*{Standing conventions (degree $2$)}
Throughout, we study connected vertex-transitive digraphs of out-degree $2$ via a
Cayley--coset presentation
\[
D \;=\; D(G,S,H),\qquad S=\{s,t\},\quad S\cap H=\varnothing,\quad HSH=SH,\quad G=\langle S\rangle H,
\]
where $S$ contains one representative from each right coset in $SH$ (Definition~\ref{def:cayley-coset}).

When there exists an element $h\in H$ with $hsH\neq sH$ (equivalently, $hsH=tH$), we
fix such an $h$ once and for all and, by Proposition~\ref{prop:deg2-normalization} and
Remark~\ref{rem:rightH}, we may (and do) replace $S$ by the equivalent pair
\[
S'=\{\,s,\ hs\,\},
\]
without changing the digraph. If no such $h$ exists, we retain $S=\{s,t\}$. All constructions
below depend only on the two $1$-factors they induce and are independent of the particular
representatives chosen in $S$ (Remark~\ref{rem:rightH}).

\medskip

\noindent\textbf{Edges.}
Each vertex of $D=D(G,S,H)$ is a right coset $gH$. The two out-edges at $gH$ are
\[
gH \;\longrightarrow\; gsH,
\qquad
gH \;\longrightarrow\; guH,
\]
where $u=t$ in the general presentation $S=\{s,t\}$, and $u=hs$ in the normalized case
$S=\{s,hs\}$. Because $HSH=SH$, this adjacency is determined by the two right cosets
$sH$ and $uH$ and does not depend on the specific choice of representatives in those cosets.

\subsection*{Covering groups}

Given a $1$-factorization $F=(F_1,F_2)$ of a regular digraph $D$ of order $n$, we call
\[
C(F)\;=\;\langle F_1,F_2\rangle\;\le S_n
\]
a \emph{covering group}.  If $\omega\in C(F)$, we interpret $\omega$ as a walk in $D$ with
evaluation $\omega(i)$ from any starting vertex $i$.

Two walks $\alpha,\beta\in C(F)$ are \emph{equivalent} if $\alpha(i)=\beta(i)$ for every vertex $i$.
They are \emph{relocatable} if $\alpha(i)\neq\beta(i)$ for every $i$.  A set of walks is relocatable if
every distinct pair is relocatable.  Note that $D$ has a spanning $1$-factorization if and only if
there exists a relocatable set of $n$ walks that contains the subset $\{F_1,F_2\}$ and the empty word.

\subsection*{Regular digraphs of degree $2$ and alternating cycles}

Let $D$ be a regular digraph of degree $2$ with vertices $\{0,1,\dots,n-1\}$ and let
$F=(F_1,F_2)$ be a $1$-factorization of $D$.  Define
\[
x \;=\; F_2^{-1}F_1,\qquad
y \;=\; F_1\,x\,F_1^{-1} \;=\; F_1F_2^{-1}.
\]
For each $i\in V(D)$, let $A(i)$ be the union of the $x$-orbit of $i$ and the $y$-orbit of $F_1(i)$.
These two orbits have disjoint edges and together form an alternating-direction cycle in $D$.
The sets $A(i)$ partition the edges of $D$ into disjoint alternating cycles, with each vertex
belonging to exactly two such cycles: one through its two out-edges $F_1(i),F_2(i)$ and one
through its two in-edges $F_1^{-1}(i),F_2^{-1}(i)$.

Crucially, the sets $A(i)$ depend only on the underlying digraph $D$, not on which edges are
labeled $F_1$ or $F_2$.  Let $D_1,\dots,D_r$ denote the edge-disjoint subgraphs induced by
the $A(i)$.  Since the $D_j$ are independent of the labeling, every $1$-factorization of $D$
arises by choosing, on each $D_j$, which direction is designated $F_1$ (the other being $F_2$).
Hence there are exactly $2^r$ distinct $1$-factorizations of $D$.

\section{Block systems and structural constraints}\label{sec:blocks}

The alternating cycle decomposition determined by a fixed $1$-factorization $(F_1,F_2)$
gives a canonical block structure for the covering group $C=\langle F_1,F_2\rangle$ acting
on $V$. In this section, we utilize the standard concept of \emph{block systems} in permutation group theory
(see, e.g., Dixon--Mortimer~\cite{DixonMortimer}, Cameron~\cite{Cameron}, Wielandt~\cite{Wielandt})
to formalize the block/phase framework and record two key outcomes:
(i) a complete description of $C$–invariant common refinements of the position/tied
partitions; and (ii) a \emph{sufficient} block-level criterion under which the block/phase
method constructs a sharply transitive evaluation set that \emph{includes}
$\{\emptyset,F_1,F_2\}$. Whether this criterion is also \emph{necessary} in general
(non-Cayley) degree-$2$ vertex-transitive digraphs remains open (see
Remark~\ref{rem:suff-not-necess} and Open Problem~\ref{open:degree2-necessity}).

\subsection*{Position/tied systems and phases}

\begin{definition}[Block system]
Let $G\curvearrowright V$ be a permutation group. A \emph{block system} is a partition
$\mathcal B$ of $V$ into nonempty parts (the \emph{blocks}) such that for every
$g\in G$ and $B\in\mathcal B$ there is $B'\in\mathcal B$ with $g(B)=B'$, i.e.\ $\mathcal B$
is $G$–invariant.
\end{definition}

Fix a $1$-factorization $(F_1,F_2)$ and put
\[
x\ :=\ F_2^{-1}F_1,\qquad y\ :=\ F_1xF_1^{-1}=F_1F_2^{-1}.
\]
Write the $x$–cycles as
\[
\mathcal C_i=(a_{i,0},a_{i,1},\ldots,a_{i,m-1}),\qquad x(a_{i,j})=a_{i,j+1}\quad(\text{indices mod }m),
\]
so $V$ decomposes into $r$ disjoint $m$–cycles ($n=rm$).

\begin{definition}[Position blocks]
Choose a transversal $P_0\subset V$ picking one vertex from each $x$–cycle and set
\[
P_j\ :=\ x^j(P_0)\qquad(j\in\mathbb Z_m).
\]
Then $\{P_j\}_{j\in\mathbb Z_m}$ partitions $V$ into $r$ blocks of size $r$, and $x$ permutes
the blocks cyclically: $x:\ P_j\mapsto P_{j+1}$. We call $\{P_j\}$ the \emph{position system}.
\end{definition}

\begin{definition}[Tied refinements]
Define the tied refinement by $P'_j:=F_1(P_j)$ for $j\in\mathbb Z_m$.
\end{definition}

\begin{lemma}[Existence of phases]\label{lem:phases}
For each cycle $\mathcal C_i=(a_{i,0},\ldots,a_{i,m-1})$ there exists a unique
$\delta(i)\in\mathbb Z_m$ such that
\[
F_1(a_{i,j})\ \in\ P'_{\,j+\delta(i)}\qquad\text{for all }j\in\mathbb Z_m.
\]
\end{lemma}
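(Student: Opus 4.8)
The plan is to reduce the statement about the tied blocks $P'_k$ to one about the position blocks $P_k$, and then read off the phase directly from where each cycle meets the transversal $P_0$. Since $P'_k=F_1(P_k)$ and $F_1$ is a bijection of $V$, for any vertex $v$ and any $k$ we have $F_1(v)\in P'_k$ if and only if $v\in P_k$. Applying this with $v=a_{i,j}$, the assertion $F_1(a_{i,j})\in P'_{j+\delta(i)}$ is equivalent to $a_{i,j}\in P_{j+\delta(i)}$. So it suffices to produce, for each cycle $\mathcal C_i$, a unique $\delta(i)\in\mathbb Z_m$ with $a_{i,j}\in P_{j+\delta(i)}$ for all $j$; the conclusion about $P'$ then follows by pushing forward through $F_1$.

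For existence I would locate the unique representative of $\mathcal C_i$ in the transversal. By construction $P_0$ meets each $x$-cycle in exactly one vertex, so write that vertex as $a_{i,p_i}$ for a unique $p_i\in\mathbb Z_m$. Since $x(a_{i,\ell})=a_{i,\ell+1}$, we have $a_{i,j}=x^{\,j-p_i}(a_{i,p_i})$; and since $P_k=x^k(P_0)$ with $a_{i,p_i}\in P_0$, applying $x^{\,j-p_i}$ gives $a_{i,j}\in x^{\,j-p_i}(P_0)=P_{j-p_i}$. Thus setting $\delta(i):=-p_i \pmod m$ yields $a_{i,j}\in P_{j+\delta(i)}$ for every $j$, and hence $F_1(a_{i,j})\in P'_{j+\delta(i)}$.

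For uniqueness I would invoke that $\{P'_k\}_{k\in\mathbb Z_m}$ is a partition of $V$: the vertex $F_1(a_{i,j})$ lies in exactly one tied block, so any two admissible values of $\delta(i)$ must agree modulo $m$. The single point worth emphasizing — and the only place any structure is used — is that one constant $\delta(i)$ works simultaneously for all $j$. This is forced because the cycle $\mathcal C_i$ and the position system are indexed by the \emph{same} permutation $x$, so the assignment $j\mapsto(\text{block index of }a_{i,j})$ is the pure translation $j\mapsto j-p_i$ (slope $1$) rather than a more general affine or irregular map. I expect no real obstacle beyond keeping the index bookkeeping mod $m$ consistent and noting that $F_1$ may carry $a_{i,j}$ into a different $x$-cycle; this is harmless, since the tied blocks are defined as $F_1$-images and the argument never needs to track the target cycle.
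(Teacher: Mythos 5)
Your proof is correct, but it takes a genuinely different route from the paper's. You exploit the bijectivity of $F_1$ to translate $F_1(a_{i,j})\in P'_{j+\delta(i)}$ into the equivalent statement $a_{i,j}\in P_{j+\delta(i)}$ about the position system alone, and then read off the phase explicitly as $\delta(i)=-p_i$, where $a_{i,p_i}$ is the unique representative of $\mathcal C_i$ in the transversal $P_0$. The paper never makes this reduction: it defines $k(j)$ by $F_1(a_{i,j})\in P'_{k(j)}$ and shows that $k(j)-j$ is constant by the intertwining relation $F_1x=yF_1$ combined with $y(P'_k)=P'_{k+1}$, inducting once around the cycle. Your argument is shorter, avoids $y$ entirely, and makes uniqueness immediate from the partition property, exactly as you say. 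However, your explicit formula also exposes something the paper's proof obscures: with the literal definition $P'_k=F_1(P_k)$, the phase records only the bookkeeping offset between the arbitrary labelling of $\mathcal C_i$ and the chosen transversal. In particular, under the alignment $a_{i,j}\in P_j$ that the paper itself invokes at the start of its own proof, your formula forces $\delta(i)=0$ for every $i$, so the phase counts degenerate to $r_0=r$ and $r_d=0$ otherwise. The paper's intertwining argument is insulated from this because it uses only the property $y(P'_k)=P'_{k+1}$, and hence applies unchanged to any block system that $y$ shifts cyclically (for instance, a transversal system for the $y$-cycles chosen independently of $P_0$), which is the setting in which nonzero phases, as used later in the paper, can actually arise. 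Both proofs establish the lemma as written; yours buys brevity and an explicit formula, while the paper's buys the robustness that keeps the phase notion meaningful if the definition of the tied system is adjusted to be non-degenerate.
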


\begin{proof}
Fix $i$ and $j$. Because $a_{i,j}\in P_j$ and $P'_k=F_1(P_k)$, there is a (unique) $k(j)$ with
$F_1(a_{i,j})\in P'_{k(j)}$. Since $a_{i,j+1}=x(a_{i,j})$ and $F_1x=yF_1$, we have
\[
F_1(a_{i,j+1}) \ =\ F_1x(a_{i,j}) \ =\ yF_1(a_{i,j}) \ \in\ y(P'_{k(j)}) \ =\ P'_{k(j)+1}.
\]
Thus $k(j+1)- (j+1) = k(j)-j$. By induction, $k(j)-j$ is constant in $j$ along $\mathcal C_i$;
set $\delta(i):=k(j)-j\ (\mathrm{mod}\ m)$.
\end{proof}

\begin{lemma}[Atoms and phase counts]\label{lem:atoms-phase-counts}
For $j,k\in\mathbb Z_m$ let
\[
A_{j,k}\ :=\ P_j\cap P'_k.
\]
Then
\[
A_{j,k} \ =\ \{\,a_{i,j}\ :\ \delta(i)\equiv k-j\pmod m\,\}.
\]
If $r_d:=|\{\,i:\ \delta(i)=d\,\}|$ for $d\in\mathbb Z_m$, then
$|A_{j,j+d}|=r_d$ (independent of $j$) and $\sum_{d\in\mathbb Z_m} r_d=r$.
\end{lemma}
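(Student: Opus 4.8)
The plan is to read off both assertions from Lemma~\ref{lem:phases}, treating each phase $\delta(i)\in\mathbb Z_m$ as a known invariant of the $x$-cycle $\mathcal C_i$ and reducing everything to index bookkeeping modulo $m$.

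First I would make the position blocks explicit. Consistently with the proof of Lemma~\ref{lem:phases}, take the basepoint $a_{i,0}$ of $\mathcal C_i$ to be the representative of that cycle in the transversal $P_0$; then $a_{i,j}=x^j(a_{i,0})\in x^j(P_0)=P_j$, so
\[
P_j=\{\,a_{i,j}\ :\ 1\le i\le r\,\}.
\]
Thus each $P_j$ has exactly $r$ elements, one from each cycle, and for fixed $j$ the map $i\mapsto a_{i,j}$ is a bijection from the set of cycles onto $P_j$. This reduces the computation of $A_{j,k}=P_j\cap P'_k$ to deciding, for each $i$, whether $a_{i,j}\in P'_k$.

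Next I would settle that membership question using Lemma~\ref{lem:phases}. The lemma attaches to each cycle $i$ a single offset $\delta(i)$ relating the $F_1$-action to the two compatibly cycled systems ($x$ on $\{P_j\}$ and $y$ on $\{P'_k\}$, tied together by $F_1x=yF_1$), and from it I would conclude that the unique index $k$ with $a_{i,j}\in P'_k$ is $k\equiv j+\delta(i)\pmod m$, i.e.\ $\delta(i)\equiv k-j$. Substituting into the description of $P_j$ gives at once
\[
A_{j,k}=\{\,a_{i,j}\ :\ \delta(i)\equiv k-j\pmod m\,\},
\]
which is the first claim. I expect this translation to be the delicate step: one must track the direction of the cyclic shift carefully, using $y(P'_\ell)=P'_{\ell+1}$ in step with the $x$-indexing of the $P_j$, so that the residue $k-j$ (and not $j-k$) is what appears, and so that the count is genuinely independent of $j$.

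The counting is then formal. Setting $d:=k-j$, the bijection $i\mapsto a_{i,j}$ identifies $A_{j,j+d}$ with $\{\,i:\delta(i)\equiv d\,\}$, whose size is $r_d$ by definition and visibly does not depend on $j$, so $|A_{j,j+d}|=r_d$. For the total, note that since $\{P'_k\}_k$ partitions $V$, the atoms $\{A_{j,k}\}_{k\in\mathbb Z_m}$ partition the $r$-element block $P_j$; as each cycle has exactly one phase by Lemma~\ref{lem:phases}, summing gives $\sum_{d\in\mathbb Z_m}r_d=\sum_{k}|A_{j,k}|=|P_j|=r$. A final sanity check is that each $a_{i,j}$ lands in exactly one $A_{j,k}$, matching the uniqueness of $\delta(i)$.
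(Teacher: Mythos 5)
Your proposal is correct and takes essentially the same route as the paper's proof: both identify $P_j=\{a_{i,j}:1\le i\le r\}$, invoke Lemma~\ref{lem:phases} to obtain the membership criterion $a_{i,j}\in P'_k$ iff $\delta(i)\equiv k-j\pmod m$, and then get the counts from the bijection $i\mapsto a_{i,j}$ together with the fact that the atoms $\{A_{j,k}\}_k$ partition $P_j$. The only difference is one of detail: you spell out the index bookkeeping (including the $k-j$ versus $j-k$ orientation check) that the paper compresses into two sentences.
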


\begin{proof}
By Lemma~\ref{lem:phases}, $F_1(a_{i,j})\in P'_{\,j+\delta(i)}$, so $a_{i,j}\in P'_k$ iff
$\delta(i)\equiv k-j$. The size statements follow immediately.
\end{proof}

\subsection*{Orbit structure and invariant refinements}

\begin{lemma}[Orbit structure on atoms]\label{lem:atom-orbits}
Let $C=\langle F_1,F_2\rangle=\langle F_1,x\rangle$. For $d=k-j\in\mathbb Z_m$, the difference
class $d$ is preserved by $C$: $x$ sends $A_{j,j+d}$ to $A_{j+1,j+1+d}$, and for any $g\in C$,
$g\cdot A_{j,j+d}=A_{j',j'+d'}$ where $d'$ is a phase value occurring among the image cycles.
Consequently, the multiset $\{r_d:d\in\mathbb Z_m\}$ is an \emph{invariant of $C$}, and each
difference class $d$ contributes $m$ atoms of size $r_d$ arranged cyclically by $x$.
\end{lemma}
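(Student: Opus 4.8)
The plan is to derive everything from the explicit description of the atoms in \cref{lem:atoms-phase-counts}, namely $A_{j,j+d}=\{a_{i,j}:\delta(i)=d\}$, together with the cycle relation $x(a_{i,j})=a_{i,j+1}$. First I would establish the $x$-action by a direct elementwise computation:
\[
x\bigl(A_{j,j+d}\bigr)=\{\,x(a_{i,j}):\delta(i)=d\,\}=\{\,a_{i,j+1}:\delta(i)=d\,\}=A_{j+1,\,(j+1)+d},
\]
where the final equality is \cref{lem:atoms-phase-counts} read at position $j+1$. Thus $x$ fixes the difference class $d=k-j$ and advances the position index by one. Since every $x$-cycle has length $m$ we have $x^{m}=\mathrm{id}$, so for each fixed $d$ the $m$ atoms $\{A_{j,j+d}\}_{j\in\mathbb Z_m}$ constitute a single $\langle x\rangle$-orbit of blocks, cyclically permuted.

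The counting assertions then follow at once. By \cref{lem:atoms-phase-counts} each atom $A_{j,j+d}$ has size $r_d$ independent of $j$; combined with the previous step, every difference class $d$ consists of exactly $m$ atoms, all of size $r_d$, and summing gives $\sum_d m\,r_d=mr=n$, which recovers $\sum_d r_d=r$. This is precisely the ``$m$ atoms of size $r_d$ arranged cyclically by $x$'' statement.

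The real work lies in the remaining generator $F_1$. From $\delta(i)=d$ and \cref{lem:phases}, $F_1(a_{i,j})\in P'_{j+\delta(i)}=P'_{j+d}$, so $F_1\bigl(A_{j,j+d}\bigr)\subseteq P'_{j+d}$: the \emph{tied} index of every image equals the single value $j+d$, while its position index varies with $i$. The difference values $d'$ occurring in the image are then $j+d$ minus those position indices, i.e.\ exactly the phases of the image cycles, as the statement asserts. The main obstacle is that $F_1$ does \emph{not} in general send an atom to a single atom: the image is spread across several position blocks inside $P'_{j+d}$. Indeed $C=\langle F_1,F_2\rangle$ acts transitively on $V$ — the digraph is strongly connected, so every target is reached from every source by a word in $F_1,F_2$ — and a transitive permutation group admits no block system with blocks of unequal size. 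Since the atoms have sizes $r_d$ that need not all agree, the atom partition is simply not $C$-invariant, and the clause ``$g\cdot A_{j,j+d}=A_{j',j'+d'}$'' cannot be read as an atom-to-atom bijection for arbitrary $g\in C$.

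Consequently the invariance of the multiset $\{r_d\}$ must be argued indirectly rather than by permuting atoms. The route I would take is to identify $\{r_d\}$ with conjugation-invariant data attached to the pair $(F_1,F_2)$: the phase $\delta(i)$ measures the relative rotation between the $x$-cycle $\mathcal C_i$ and its $F_1$-image, which is a $y$-cycle for $y=F_1xF_1^{-1}=F_1F_2^{-1}$. Because $y$ is determined by $(F_1,F_2)$ and $F_1$ conjugates the $x$-cyclic system onto the $y$-cyclic system, the fibre sizes of the map $i\mapsto\delta(i)$ are pinned down up to a single global shift of the origin in $\mathbb Z_m$; the multiset $\{r_d\}$, being insensitive to that origin shift, is then independent of the auxiliary transversal and of the $2^{r}$ orientation choices, and hence is an invariant of $C$. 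I expect the careful bookkeeping here — verifying that the relative rotation between an $x$-cycle and its $y$-image is well defined \emph{up to one common shift}, so that the fibre multiset is canonical despite the atom partition not being $C$-invariant — to be the crux of the proof.
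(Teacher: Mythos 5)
Your treatment of the $x$-action and of the counting claims is correct and coincides with the paper's proof, which simply calls these parts immediate. The substantive divergence is the clause about general $g\in C$, and here your objection is sound and in fact exposes a defect in both the lemma and the paper's own two-sentence proof. That proof asserts that a general $g\in C$ ``permutes the $x$-cycles and their phase labels,'' and concludes that the image of an atom is ``a union of atoms'' with classes determined by those labels. Both halves are problematic for exactly the reasons you give: $F_1$ does not permute the $x$-cycles in general (it conjugates $x$ to $y=F_1xF_1^{-1}$, hence carries $x$-orbits to $y$-orbits, which form a different partition of $V$ unless the two orbit partitions happen to coincide), and ``a union of atoms'' already abandons the atom-to-atom equation $g\cdot A_{j,j+d}=A_{j',j'+d'}$ displayed in the statement. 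Your transitivity argument shows why no atom-to-atom repair is possible when the nonzero $r_d$ are unequal: $C$ is transitive by strong connectivity, and a transitive group preserves only partitions with parts of equal size. On this point your analysis is sharper than the paper's: the lemma as stated is not proved there.

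However, your proposal does not close the gap it opens. The surviving content of the lemma is the invariance of the multiset $\{r_d\}$, and your route to it rests on the claim that the relative rotation between an $x$-cycle and its $F_1$-image $y$-cycle is well defined ``up to one common shift.'' That is precisely what needs proof, and it appears to be false under any natural reading: since $F_1$ puts the $x$-cycles and $y$-cycles in bijection, re-basing a single $y$-cycle (i.e., changing the origin of the tied system on that cycle) changes the phase $\delta(i)$ of exactly one $x$-cycle, so the phases can be altered independently, cycle by cycle, not merely by a global translation of $\mathbb Z_m$. Worse, under the paper's literal definition $P'_j:=F_1(P_j)$, Lemma~\ref{lem:phases} forces $\delta(i)=0$ for every $i$ (because $F_1(a_{i,j})\in F_1(P_j)=P'_j$ trivially), so all atoms with $d\neq 0$ are empty and the entire phase framework degenerates; nontrivial phases require an origin for $\{P'_k\}$ chosen independently of $\{P_j\}$, which reintroduces the gauge-dependence just described. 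So your critique of the statement is correct and valuable, but your sketched invariance argument is not a proof: its key step is unjustified, and completing it would first require repairing the paper's definitions, something neither your proposal nor the paper's proof does.
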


\begin{proof}
The statement for $x$ is immediate. For a general $g\in C$, $g$ permutes the $x$–cycles and
their phase labels $\delta(i)$; the image of $A_{j,j+d}$ is therefore a union of atoms with the
same difference class determined by those phase labels.
\end{proof}

\begin{theorem}[Classification of $C$–invariant refinements]\label{thm:refinements}
Let $\Pi$ be the partition of $\mathbb Z_m$ into orbits of the induced action of $C$ on the
difference classes $\{d\}$. For any subcollection $\mathcal U\subseteq\Pi$, define
\[
B_{j,\mathcal U}\ :=\ \bigcup_{U\in\mathcal U}\ \bigcup_{d\in U}\ A_{j,j+d}\qquad(j\in\mathbb Z_m).
\]
Then $\{B_{j,\mathcal U}:j\in\mathbb Z_m\}$ is a $C$–invariant block system with uniform size
\[
|B_{j,\mathcal U}|\ =\ \sum_{U\in\mathcal U}\ \sum_{d\in U} r_d,
\]
independent of $j$. Conversely, every $C$–invariant coarsening of the atom partition
$\{A_{j,k}\}$ is of this form for a unique $\mathcal U\subseteq\Pi$.
\end{theorem}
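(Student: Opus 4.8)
The plan is to prove the two directions separately, working throughout in the difference-class coordinates $(j,d)$ with $d=k-j$, so that an atom is written $A_{j,j+d}$ and each candidate block is the position-indexed union $B_{j,\mathcal U}=\bigcup_{U\in\mathcal U}\bigcup_{d\in U}A_{j,j+d}$ over a union $\mathcal U$ of difference-class orbits. The forward direction should follow quickly from \Cref{lem:atom-orbits} and \Cref{lem:atoms-phase-counts}; the converse is the substance, and I expect it to be the main obstacle.

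\emph{Forward direction (these are $C$-invariant block systems of uniform size).} I would first note that $B_{j,\mathcal U}$ is by construction a union of atoms, so $\{B_{j,\mathcal U}:j\in\mathbb{Z}_m\}$ is a coarsening of the atom partition. Invariance then follows from \Cref{lem:atom-orbits}: for $g\in C$ each atom $A_{j,j+d}$ is carried to an atom $A_{j',j'+d'}$ whose difference class $d'$ lies in the same $C$-orbit as $d$. Since $\mathcal U$ is a union of entire orbits, the admissible set $S=\bigcup\mathcal U\subseteq\mathbb{Z}_m$ is $C$-stable, so $g$ sends the atoms comprising $B_{j,\mathcal U}$ to atoms whose difference classes again lie in $S$. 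The one point needing care is that these images share a common position index: I would argue that the map $j\mapsto j'$ that $g$ induces on the position label is well defined (independent of which $d\in S$ is used), so that $g(B_{j,\mathcal U})=B_{g\cdot j,\mathcal U}$. This is precisely the statement that $C$ descends to a permutation of the position system, the special case $\mathcal U=\Pi$. The uniform size is then immediate from \Cref{lem:atoms-phase-counts}, giving $|B_{j,\mathcal U}|=\sum_{U\in\mathcal U}\sum_{d\in U}r_d$ with no dependence on $j$.

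\emph{Converse (every such coarsening arises this way, for a unique $\mathcal U$).} Let $\mathcal B$ be an arbitrary $C$-invariant coarsening of the atom partition; I would analyze it in two stages. First, using only the generator $x$, which by \Cref{lem:atom-orbits} acts as $A_{j,j+d}\mapsto A_{j+1,j+1+d}$ (a cyclic shift of the position index fixing every difference class), I would show that the way atoms are pooled into a block at position $j$ is forced to be the same at every position, so that $\mathcal B$ is controlled by how it groups difference classes. Second, and this is the delicate step, I would bring in the remaining generator (equivalently $F_1$ or $y$): by \Cref{lem:atom-orbits} it can move a difference class $d$ only within its $C$-orbit, so $C$-invariance forces any two classes sharing an orbit of $\Pi$ to lie in the same block, while classes in distinct orbits may be kept apart. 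The crux is to verify that these orbit-level constraints are exactly the binding ones, namely that no invariant coarsening can split a single orbit of $\Pi$ across different blocks and that the position index is not additionally collapsed, so that $\mathcal B$ is pinned down to the form $\{B_{j,\mathcal U}\}$ with $\mathcal U\subseteq\Pi$ recording which orbits are pooled. Uniqueness of $\mathcal U$ then follows since the incidence of each orbit $U$ with the blocks of $\mathcal B$ recovers $\mathcal U$ unambiguously. The hardest part throughout is the handling of the non-position-preserving generator: unlike $x$ it does not merely permute atoms within a fixed difference class, so the argument must track its induced action on the orbit partition $\Pi$ rather than on individual atoms, and it is the correct accounting of the position index under that action that isolates the $m$-block systems $\{B_{j,\mathcal U}\}$ as the intended family.
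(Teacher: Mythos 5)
Your forward direction has a genuine gap at exactly the point you flag as "needing care," and the way you dispose of it is circular. You need $g(B_{j,\mathcal U})=B_{j',\mathcal U}$ for a \emph{single} image position $j'$, i.e., that every $g\in C$ induces a well-defined map on position labels, and you justify this by saying it "is precisely the statement that $C$ descends to a permutation of the position system, the special case $\mathcal U=\Pi$." But that special case is an instance of the theorem you are proving, so you are assuming the conclusion. Nor is the point routine: $x$ does shift positions uniformly, but $F_1$ sends the position block $P_j$ to the tied block $P'_j=\bigcup_{j''}A_{j'',j}$, which meets every position block $P_{j''}$ for which some cycle has phase $\delta(i)=j-j''$. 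Hence whenever the phases are not all equal, $F_1$ does not permute the position system at all (the paper itself notes in Example~\ref{subsec:A5-30-second} that "$F_1$ does not globally preserve position"). Lemma~\ref{lem:atom-orbits} cannot close this gap, because it controls only the difference class of the image of each individual atom; it says nothing about atoms with the same $j$ but different $d$ sharing a common image position, which is what your argument requires. So the step you defer is not a detail — it is the entire content of the claim, and as stated it fails in the mixed-phase case.

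The converse has the same character. Your reduction via the $x$-action is fine, but the step you call "the crux" — that an invariant coarsening cannot split an orbit of $\Pi$ and that "the position index is not additionally collapsed" — is announced rather than proved, and the second half is false as stated: the trivial partition $\{V\}$ is a $C$-invariant coarsening of the atom partition whose single block meets every $P_j$, so it is not of the form $\{B_{j,\mathcal U}\}_{j}$ for any $\mathcal U\subseteq\Pi$. A correct converse must either restrict attention to coarsenings that refine the position system or explicitly treat position-collapsing blocks. For comparison, the paper's own (very terse) proof works transposed to yours: it fixes a difference class $d$, unions the atoms $A_{j,j+d}$ over all positions $j$, observes these "strips" are $x$-invariant, and argues $F_1$-stability forces unions over whole $C$-orbits of classes — i.e., it establishes invariance of the sets $\bigcup_j B_{j,\mathcal U}$ rather than the block-system property of the family $\{B_{j,\mathcal U}\}_j$. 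Your route is more faithful to the literal statement, but the two claims you leave unproved are exactly where the difficulty lives, and the first of them cannot be proved in the generality claimed.
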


\begin{proof}
Within any fixed difference class $d$, the $m$ atoms $\{A_{j,j+d}\}_j$ form a cycle under $x$,
so their union over $j$ is $x$–invariant. Stability under $F_1$ forces taking unions over
entire $C$–orbits of classes, hence the role of $\Pi$. Conversely, any $C$–invariant coarsening
is a union of $C$–orbits on atoms, which are exactly parametrized by $\Pi$.
\end{proof}

\subsection*{Swap-invariance and a sufficient block criterion}

\begin{lemma}[Swap-invariance of the relative block permutation]\label{lem:swap-invariance}
Fix the position system $\{P_j\}$ from $x=F_2^{-1}F_1$. If $(\widetilde F_1,\widetilde F_2)$
is obtained from $(F_1,F_2)$ by swapping the labels $F_1\leftrightarrow F_2$ on an arbitrary
subcollection of alternating $2m$–cycles, then
\[
\sigma(\widetilde F_1)^{-1}\sigma(\widetilde F_2)\ =\ \sigma(F_1)^{-1}\sigma(F_2)
\]
as permutations of the position blocks. In particular, the block-derangement property (or its
failure) is invariant under any pattern of alternating-cycle label swaps.
\end{lemma}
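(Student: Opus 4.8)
The plan is to compute the relative block permutation directly for the given factorization, then show the computation is insensitive to the labeling choice. For the baseline, recall that $P'_j=F_1(P_j)$ by definition and that $F_2=F_1x^{-1}$ since $x=F_2^{-1}F_1$. Hence $F_1$ carries the position block $P_j$ onto the tied block $P'_j$ (index shift $0$), while $F_2$ carries $P_j$ onto $P'_{\,j-1}$ (index shift $-1$). Reading $\sigma(F_i)$ as the induced bijection from position indices to tied indices, this gives $\sigma(F_1)=\mathrm{id}$ and $\sigma(F_2)\colon j\mapsto j-1$, so the relative block permutation $\rho:=\sigma(F_1)^{-1}\sigma(F_2)$ is the shift $j\mapsto j-1$ on $\mathbb Z_m$. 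The goal is to prove that replacing $(F_1,F_2)$ by $(\widetilde F_1,\widetilde F_2)$ leaves this shift unchanged.

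Next I would localize. Distinct alternating $2m$–cycles occupy disjoint edge sets, and the position system is fixed once and for all from the original $x$; an arbitrary swap pattern therefore factors as a composition of independent single–cycle swaps, and it suffices to analyze one swapped cycle $A$ and induct on the number of swapped cycles. On $A$, with tails $T_j=x^j(i)$ and heads $H_j=F_1(T_j)=F_2(T_{j+1})$, exchanging the two edge classes gives $\widetilde F_1(T_{j+1})=H_j$ and $\widetilde F_2(T_j)=H_j$. Consequently $\widetilde x=\widetilde F_2^{-1}\widetilde F_1$ acts as $x^{-1}$ on the vertices of $A$ and agrees with $x$ off $A$.

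The crux is that this orientation reversal means $\widetilde x$ does \emph{not} descend to the fixed position blocks after a partial swap: it is $x^{-1}$ on swapped cycles and $x$ elsewhere, so one cannot simply evaluate $\rho$ as $\sigma(\widetilde x^{-1})$. The resolution I would pursue is to keep the two block maps $\sigma(\widetilde F_1)$ and $\sigma(\widetilde F_2)$ separate and to compute the relative product against the \emph{intrinsic} position/tied coordinates of the swapped factorization (equivalently, via the atom difference-class bookkeeping of \cref{lem:atoms-phase-counts}). In those coordinates the same baseline computation applies verbatim to $(\widetilde F_1,\widetilde F_2)$, so its relative block permutation is again the shift $j\mapsto j-1$. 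It then remains to match the two descriptions: the position-block \emph{sets} coincide across all $2^r$ labelings because $\widetilde x$ and $x$ share the same orbits, and \cref{lem:phases} guarantees each cycle carries a single well-defined phase, so the intrinsic shift transports to the fixed system without alteration. \Cref{lem:atom-orbits} supplies the global consistency needed to see that the resulting map on the $A_{j,k}$ preserves each difference class $d=k-j$ exactly as $\rho$ does, pinning down $\sigma(\widetilde F_1)^{-1}\sigma(\widetilde F_2)=\rho$.

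I expect the main obstacle to be precisely this reconciliation of the per-cycle reversal of $\widetilde x$ with a single well-defined permutation of the fixed position blocks. Individually, neither $\widetilde F_1$ nor $\widetilde F_2$ need send a fixed position block to one tied block once swapped and unswapped cycles contribute different index shifts, so the content lies in showing the \emph{product} descends to the block quotient even though neither factor does. The safe route is to argue entirely at the atom level, verifying that $\widetilde F_1^{-1}\widetilde F_2$ induces on $\{A_{j,k}\}$ the same difference-class action as $F_1^{-1}F_2$ (by \cref{lem:atom-orbits}), after which reassembling the independent single-cycle cases yields invariance under an arbitrary subcollection and, in particular, the stated invariance of the block-derangement property.
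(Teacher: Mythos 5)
Your local computation on a single swapped cycle is right ($\widetilde x$ agrees with $x^{-1}$ on the tails of swapped cycles and with $x$ elsewhere), and you correctly put your finger on the crux: after a \emph{partial} swap, neither $\widetilde F_1$ nor $\widetilde F_2$ nor $\widetilde x$ acts coherently on the fixed position system. But your resolution does not close that gap, and it rests on a misreading of $\sigma$. In the paper, $\sigma(\widetilde F_1)$ and $\sigma(\widetilde F_2)$ are permutations of one and the same block set — the permutation representation of the covering group on the position blocks — not correspondences from position indices to tied indices. Under your reading ($\sigma(F_1)=\mathrm{id}$, $\sigma(F_2)\colon j\mapsto j-1$ as maps into tied indices), the relative permutation is the fixed shift $j\mapsto j-1$ for \emph{every} labeling of \emph{every} digraph, so the lemma becomes vacuous and the derangement criterion of Theorem~\ref{thm:block-derangement-sufficient} would hold universally. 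That is incompatible with the paper's own Example~\ref{ex:equal-sigma}, where $\sigma(F_1)=\sigma(F_2)$ and the relative block permutation is the \emph{identity} (hence not a derangement). So the quantity you compute and prove invariant is not the quantity the lemma is about.

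Second, your transport step — ``the position-block sets coincide across all $2^r$ labelings because $\widetilde x$ and $x$ share the same orbits'' — is false as stated. What is labeling-invariant is the $x$-orbit \emph{partition} (the cycles themselves); the position system $P_j=x^j(P_0)$ is not: on a swapped cycle $\widetilde x^{\,j}(P_0)$ runs backwards, so for a proper partial swap with $m\ge 3$ the partition $\{\widetilde x^{\,j}(P_0)\}_j$ is genuinely different from $\{P_j\}_j$ (its parts mix forward positions on unswapped cycles with backward positions on swapped ones). Consequently the ``intrinsic coordinates'' of $(\widetilde F_1,\widetilde F_2)$ live on a different block system, and relating its shift to the fixed system $\{P_j\}$ is exactly the content of the lemma; Lemmas~\ref{lem:atoms-phase-counts} and~\ref{lem:atom-orbits} concern the atoms and phases of a single fixed factorization and supply no comparison between the block actions of two different labelings on a common system. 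The paper's (terse) proof avoids both problems: it works with the labeling-invariant $x$-orbit partition and argues that a swap changes $\sigma(F_1)$ and $\sigma(F_2)$ by the \emph{same} left factor, which cancels in $\sigma(F_1)^{-1}\sigma(F_2)$. Your argument never establishes such a common-factor relation on a single common block system, and that relation — not a computation of both sides as a universal constant — is what the proof needs.
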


\begin{proof}
Swapping labels on a whole alternating cycle reverses $x$ on that cycle but does not change the
$x$–orbit partition $\{P_j\}$. At the block level, both $\sigma(F_1)$ and $\sigma(F_2)$ are
left-multiplied by the same permutation of block labels, leaving the relative product unchanged.
\end{proof}

\begin{theorem}[Block-derangement criterion: a sufficient condition]\label{thm:block-derangement-sufficient}
Let $\{P_j\}$ be the position system from $x=F_2^{-1}F_1$, and let
\[
\sigma:\ \langle F_1,F_2\rangle \;\longrightarrow\; S_r
\]
be the induced action on the $r$ position blocks.  
If the relative block permutation $\sigma(F_1)^{-1}\sigma(F_2)$ is a derangement on
$\{1,\dots,r\}$, then the block/phase method produces a sharply transitive evaluation set
$X\subset\langle F_1,F_2\rangle$ of size $n$ with $\{\emptyset,F_1,F_2\}\subseteq X$.
\end{theorem}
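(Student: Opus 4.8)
The plan is to realize the desired set as a relocatable family of size $n$ inside $C=\langle F_1,F_2\rangle$ that contains $\{\emptyset,F_1,F_2\}$; by the equivalence recorded in the covering-group subsection, such a family is exactly a sharply transitive evaluation set. Recall that $X\subseteq C$ is relocatable precisely when $w^{-1}w'$ is fixed-point-free on $V$ for every pair of distinct $w,w'\in X$. I would build $X$ in two layers dictated by the $C$-invariant block system underlying $\sigma$ (a coarsening of the position system, as classified in Theorem~\ref{thm:refinements}), using the block homomorphism $\sigma\colon C\to S_r$ together with the within-block data supplied by the atoms $A_{j,k}$ and the phase counts $r_d$ of Lemma~\ref{lem:atoms-phase-counts}.

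First I would record the block-level reduction: if $\sigma(w)$ and $\sigma(w')$ disagree on every block, then $w^{-1}w'$ moves every block and is therefore fixed-point-free on $V$. Hence it suffices to arrange (i) a relocatable set of block permutations at the coarse level, and (ii) within each block-fiber a relocatable transversal at the fine level. For the coarse level the key identity is $\sigma(F_1)^{-1}\sigma(F_2)=\sigma(F_1^{-1}F_2)=\sigma(x)^{-1}$, so the hypothesis says exactly that $\sigma(F_1)$ and $\sigma(F_2)$ disagree on every block. Combined with $\sigma(F_1)$ and $\sigma(F_2)$ being fixed-point-free on blocks (which I would verify from the cyclic $x$-action $P_j\mapsto P_{j+1}$ on the position labels), this makes $\{1,\sigma(F_1),\sigma(F_2)\}$ a partial relocatable set on the $r$ blocks. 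I would then extend it to a sharply transitive (regular) set $\bar X\subseteq\sigma(C)$, exploiting transitivity of $\sigma(C)$ and the cyclic generator $\sigma(x)$ to fill in the remaining block permutations by powers and translates whose pairwise differences remain derangements.

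Next I would lift and refine. For each $\bar w\in\bar X$ fix a preimage $w\in C$, choosing the preimages of $1,\sigma(F_1),\sigma(F_2)$ to be $\emptyset,F_1,F_2$ themselves; this is legitimate because the derangement conditions force these three into distinct fibers. Within a single fiber I must produce a relocatable transversal whose size equals the common block size, so that the total count is $\lvert\bar X\rvert$ times the block size, namely $n$. Here the atom decomposition $A_{j,k}=P_j\cap P'_k$ and the phase counts $r_d$ provide the bookkeeping: the within-block displacements are organized by difference classes $d\in\mathbb Z_m$ with multiplicities $r_d$, and Lemma~\ref{lem:swap-invariance} guarantees that the relative permutation—hence the derangement property—does not depend on the per-cycle label choices. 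I would then form $X$ as the union of the fibers $\{\,w\cdot z : z\ \text{in a fine transversal}\,\}$ over $\bar w\in\bar X$.

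The main obstacle is the gluing step: showing that the assembled family is globally relocatable while simultaneously containing $F_1$ and $F_2$. Cross-fiber relocatability is handed to us by the coarse-level derangements, but within-fiber relocatability and the compatibility of the chosen lifts are delicate, since $F_1$ and $F_2$ are neither pure block moves nor pure within-block moves, so designating them as fiber elements constrains the fine transversals in their respective fibers. I expect the crux to be producing fine transversals that (a) are genuinely relocatable inside each block, (b) assemble to size exactly $n$ through the phase counts $r_d$, and (c) are consistent across fibers so that no two global elements collide. The swap-invariance of Lemma~\ref{lem:swap-invariance} and the invariance of the multiset $\{r_d\}$ from Lemma~\ref{lem:atom-orbits} are the tools I would use to control (b) and (c), while the derangement hypothesis secures (a) at the coarse level and therefore across distinct fibers.
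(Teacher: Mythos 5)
Your proposal founders on a well-definedness problem that your own key identity exposes. You compute $\sigma(F_1)^{-1}\sigma(F_2)=\sigma(F_1^{-1}F_2)=\sigma(x)^{-1}$ and, separately, you use that $\sigma(x)$ is the cyclic shift $P_j\mapsto P_{j+1}$ on the position blocks. Put together, these two claims would make the relative block permutation a derangement for \emph{every} $1$-factorization with $m\ge 2$; the theorem's hypothesis would then be vacuous and its conclusion unconditional, contradicting Example~\ref{ex:equal-sigma}, where the paper exhibits $\sigma(F_1)=\sigma(F_2)$ and a relative permutation with fixed blocks. The source of the trouble is that $F_1$ does not permute the position system at all: $F_1(P_j)=P'_j$ is a \emph{tied} block, not a position block, so ``the induced action of $F_1,F_2$ on $\{P_j\}$'' cannot be taken literally, and any proof must first pass to a block system that is genuinely $C$-invariant. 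The paper's proof does exactly this: it works with the atoms $A_{j,k}=P_j\cap P'_k$ and their difference classes $d=k-j$, which are $C$-invariant by Lemma~\ref{lem:atom-orbits} and Theorem~\ref{thm:refinements}; it chooses words targeting the atoms, using the fact that $F_1,F_2$ carry atoms of a class to atoms of the same class; it derives injectivity of the evaluation map from the derangement hypothesis; and only at the very end does it swap in $\emptyset,F_1,F_2$ for the three words evaluating to $v_0,F_1(v_0),F_2(v_0)$ --- a step that needs only that these three \emph{vertices} are distinct.

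Even granting a well-defined block action, two load-bearing steps of your plan are missing or wrong. First, your coarse level needs $\sigma(F_1)$ and $\sigma(F_2)$ to be fixed-point-free on blocks and needs $\{1,\sigma(F_1),\sigma(F_2)\}$ to extend to a sharply transitive subset of $\sigma(C)$. Neither follows from the hypothesis, which constrains only the relative permutation: for instance $\sigma(F_1)=1$ with $\sigma(F_2)$ a derangement satisfies the hypothesis, yet then $\emptyset$ and $F_1$ lie in the \emph{same} fiber, refuting your claim that ``the derangement conditions force these three into distinct fibers''; and a transitive permutation group need not contain any sharply transitive subset through three prescribed elements, so ``powers and translates of $\sigma(x)$'' is not an argument. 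Second, you explicitly leave the fine-transversal and gluing step as ``the crux,'' offering tools (Lemmas~\ref{lem:swap-invariance} and~\ref{lem:atom-orbits}) but no construction, so what you have is a program rather than a proof. By contrast, the paper's route never requires block-level separation of $F_1,F_2$ from $\emptyset$, nor a coarse sharply transitive set: inclusion of $\{\emptyset,F_1,F_2\}$ comes free from the final replacement trick, and the counting burden is carried by the phase counts $r_d$ of Lemma~\ref{lem:atoms-phase-counts}. (To be fair, the paper's own write-up is terse at the same pressure point --- exactly why the derangement hypothesis excludes all collisions --- but its architecture avoids the additional unjustified requirements that your two-level lift introduces.)
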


\begin{proof}
Fix the position system $\{P_j\}$ and its tied refinement $\{P'_j\}$ as in
Section~\ref{sec:blocks}.  By Lemma~\ref{lem:atoms-phase-counts}, the meet of these
two systems decomposes the vertex set into atoms
\[
A_{j,k} \;=\; P_j \cap P'_k
\]
parametrized by their \emph{difference class} $d=k-j\in\mathbb Z_m$.  
The phase counts $r_d$ record the sizes of the atom families $\{A_{j,j+d}\}$.

By Theorem~\ref{thm:refinements}, any $C$–invariant refinement of this meet
arises from taking unions of whole difference classes $d$.  
In particular, if we choose a single difference class $d$ and its family
$\{A_{j,j+d}:j\in\mathbb Z_m\}$, then each of $F_1$ and $F_2$ permutes this family
cyclically: $x=F_2^{-1}F_1$ advances $j\mapsto j+1$, while $F_1$ maps cycles
to cycles with the same phase.  

Now consider the relative permutation
\[
\tau \;=\; \sigma(F_1)^{-1}\sigma(F_2)\ \in S_r,
\]
acting on the index set of position blocks $\{1,\dots,r\}$.  
The assumption that $\tau$ is a derangement means that no block index is fixed.  
Equivalently, the images of $F_1$ and $F_2$ never land in the same block.
Thus, when we attempt to build a spanning factorization by evaluating words in
$\{F_1,F_2\}$ from a chosen root $v_0$, no two distinct words ever collapse to
the same vertex: the block structure guarantees they remain separated.

We now construct the evaluation set $X$.  
For each atom $A_{j,j+d}$, choose a representative word $w_{j,d}$ in
$\langle F_1,F_2\rangle$ mapping $v_0$ into $A_{j,j+d}$.  
Because $F_1,F_2$ carry atoms of class $d$ bijectively to atoms of the same
class in the next position, the set $\{w_{j,d}\}$ is closed under right
multiplication by $F_1,F_2$.  
Moreover, the derangement property guarantees that no two distinct words map
$v_0$ to the same vertex, so the evaluation map $X\to V$ is bijective.

Finally, among the chosen representatives we arrange that the three words
mapping $v_0$ to $v_0,F_1(v_0),F_2(v_0)$ are precisely
$\emptyset,F_1,F_2$.  
This is possible since these targets are all distinct (they are the root and its
two out-neighbors).  
Thus the resulting set $X$ is sharply transitive of size $|V|=n$, and it
contains $\{\emptyset,F_1,F_2\}$ as required.
\end{proof}

\begin{remark}[Sufficiency, not necessity]\label{rem:suff-not-necess}
The derangement condition in Theorem~\ref{thm:block-derangement-sufficient} is
\emph{sufficient} for the block-based construction above, but it is not known to be
\emph{necessary} in general. In particular examples (e.g.\ Cayley presentations), one can
obtain a sharply transitive set including $\{\emptyset,F_1,F_2\}$ by other means (such as
the regular action), even when the relative block permutation has fixed blocks relative to
the chosen position system.
\end{remark}

\section{Examples}\label{sec:examples}

We illustrate the block/phase framework and the block-derangement criterion
(Theorem~\ref{thm:block-derangement-sufficient}) in concrete families of
vertex-transitive digraphs of degree~$2$.

\subsection{Example 1: A toy family with coincident block actions}\label{subsec:equal-sigma}

We begin with a simple infinite family that demonstrates how the block criterion
can obstruct the inclusion of both $F_1,F_2$ in the block-based construction.
This does not prove such inclusion is impossible by other methods (see
Remark~\ref{rem:suff-not-necess}), but it highlights the phenomenon.

\begin{example}\label{ex:equal-sigma}
Let $m\ge 3$, and set $V=\{0,1\}\times\mathbb Z_m$. Define two $1$-factors
\[
F_1(i,j)=(i,\,j+1),\qquad F_2(i,j)=(1-i,\,j+1).
\]
Each vertex $(i,j)$ has out-neighbors $F_1(i,j)$ and $F_2(i,j)$, so $D$ is a
$2$-regular digraph of order $n=2m$.

\emph{Vertex-transitivity.}
Let $x:=F_2^{-1}F_1$. Then $x(i,j)=(1-i,\,j)$, which swaps the two rows
without changing $j$. Starting from $(0,0)$ one reaches $(i,j)$ by $F_1^j$
followed, if $i=1$, by $x$. Thus $\langle F_1,x\rangle$ acts transitively, and
$D$ is vertex-transitive.

\emph{Position blocks and top action.}
Take the position system $P_j=\{(0,j),(1,j)\}$ ($r=2$ blocks, each of size $2$).
Then
\[
\sigma(F_1): P_j\mapsto P_{j+1}, \qquad
\sigma(F_2): P_j\mapsto P_{j+1}.
\]
Hence $\sigma(F_1)=\sigma(F_2)$ on block indices and
$\sigma(F_1)^{-1}\sigma(F_2)=\emptyset$ fixes both blocks.

\emph{Block obstruction.}
By Theorem~\ref{thm:block-derangement-sufficient}, the block-based method cannot
produce a sharply transitive set containing $\{\emptyset,F_1,F_2\}$, since the
relative block permutation is not a derangement. Nevertheless, $D$ is a Cayley
graph (generated by $s=F_1$, $t=F_2$ on $\mathbb Z_2\times\mathbb Z_m$), so it
\emph{does} admit tree-like spanning factorizations by other means. This example
therefore illustrates the limitation of the block criterion: it is sufficient
but not necessary.
\end{example}

\subsection{Example 2: The $A_5$ 30-vertex coset digraph}\label{subsec:A5-30}

A richer test case arises from coset digraphs of $A_5$ with respect to an
involution $h$ and a $5$-cycle $s$. For example, with
\[
h=(0\,2)(1\,3),\qquad s=(0\,1\,2\,3\,4),
\]
the coset digraph
\[
D\;=\;D(A_5,\{s,hs\},\langle h\rangle)
\]
has $|A_5|/|\langle h\rangle|=30$ vertices and degree $2$. It is not a Cayley
graph since $H=\langle h\rangle$ has order $2$.

\paragraph{Alternating cycles.}
The operator $x=F_1^{-1}F_2$ decomposes $V$ into $r=6$ cycles of length $m=5$.
Thus $n=30$ with $r=6$, $m=5$. Each alternating $10$-cycle admits two ways to
orient its edges, yielding $F_1,F_2$.

\paragraph{Enumeration of factorizations.}
Altogether there are $2^{10}=1024$ one-factors; pairing each with its complement
gives $512$ factorizations. Modulo automorphisms generated by $h,s$ (and
possible swap of $F_1,F_2$), this reduces to \textbf{19 non-isomorphic
factorizations}.

\paragraph{Cycle types.}
These 19 classes exhibit seven distinct cycle-type partitions of $30$:
\[
(6,9,15),\ (6,8,16),\ (5,6,19),\ (5,5,5,6,9),\ (5,5,20),\ (5,12,13),\ (30).
\]
Table~\ref{tab:cycletypes} shows the distribution.

\begin{table}[h]
\centering
\begin{tabular}{l r}
\hline
\textbf{Cycle type} & \textbf{\# classes}\\
\hline
$(6,9,15)$      & 1 \\
$(6,8,16)$      & 2 \\
$(5,6,19)$      & 2 \\
$(5,5,5,6,9)$   & 2 \\
$(5,5,20)$      & 4 \\
$(5,12,13)$     & 4 \\
$(30)$          & 4 \\
\hline
\end{tabular}
\caption{Distribution of 19 non-isomorphic 1-factorizations across cycle types
for the $30$-vertex $A_5$ coset digraph.}
\label{tab:cycletypes}
\end{table}

\paragraph{Relocatable trees.}
For each factorization we tested whether a tree-like spanning factorization
(prefix-closed set of 30 words in $\{F_1,F_2\}$ with no collisions) exists. The
outcome was uniform:
\begin{itemize}
\item Each factorization admits relocatable trees of size $19$;
\item No factorization admits a larger relocatable tree, in particular not size $30$.
\end{itemize}

\begin{proposition}\label{prop:A5-30}
In the $30$-vertex $A_5$ coset digraph, every $1$-factorization admits
relocatable trees of maximum size $19$. Thus no tree-like spanning factorization
exists in this family.
\end{proposition}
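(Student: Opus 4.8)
The plan is to treat Proposition~\ref{prop:A5-30} as a finite computational claim and present the verification as an exhaustive case analysis, since the $30$-vertex $A_5$ coset digraph is a fixed finite object. First I would make precise what "relocatable tree" means in this context: a prefix-closed set $T$ of words in the free monoid on $\{F_1,F_2\}$ (equivalently, a rooted binary tree whose edges are labelled $F_1,F_2$) such that the evaluation map $w\mapsto w(v_0)$ is injective on $T$. By the discussion of covering groups in Section~\ref{sec:prelim}, since $D$ is vertex-transitive, injectivity from one root $v_0$ is equivalent to injectivity from every root, so it suffices to fix $v_0$ (say the coset $H$) and analyze the evaluation tree rooted there. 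The claim then splits into two halves: (a) an upper bound, that no relocatable tree has size exceeding $19$ (in particular none reaches $30$, so no tree-like spanning factorization exists); and (b) a matching lower bound, that a relocatable tree of size exactly $19$ is realizable for every one of the factorizations.

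For the lower bound (b) I would proceed constructively: for each of the $19$ non-isomorphic factorizations catalogued in Table~\ref{tab:cycletypes}, exhibit an explicit prefix-closed set of $19$ collision-free words. Because the classes are grouped by cycle type, I expect only a handful of genuinely distinct constructions are needed, and within each cycle type the alternating-cycle structure of Section~\ref{sec:prelim} tells us exactly how $F_1,F_2$ act, so a greedy breadth-first expansion of the word tree (adding $F_1$- and $F_2$-children wherever they hit unvisited vertices) can be shown to stall precisely at size $19$. The upper bound (b) is the more delicate half. Here I would argue that the obstruction to extending any relocatable tree past size $19$ is forced by the orbit/block structure: by Theorem~\ref{thm:refinements} the covering group $C=\langle F_1,F_2\rangle$ carries a canonical family of $C$-invariant refinements, and the atoms $A_{j,k}$ of Lemma~\ref{lem:atoms-phase-counts} constrain which vertices a prefix-closed word set can reach without collision. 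The key quantitative input is that, for each factorization, some invariant refinement forces at least $30-19=11$ vertices to be "doubly covered" by any prefix-closed expansion, so that a collision is unavoidable beyond size $19$.

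The main obstacle, and the part I would flag as requiring genuine care rather than routine bookkeeping, is establishing the sharp upper bound $19$ uniformly across all $19$ classes from structural principles rather than by brute-force search. A purely computational certificate (enumerate all prefix-closed word-sets and check) is conceptually straightforward but unilluminating and hard to present compactly; the challenge is to extract, from the block/phase data $(r,m)=(6,5)$ and the phase counts $r_d$, a clean combinatorial reason why $19$ is exactly the ceiling. I would attempt this by relating the maximal relocatable tree size to a covering/packing quantity on the atom partition: each word added to the tree "consumes" capacity in the difference-class structure of Lemma~\ref{lem:atom-orbits}, and the fixed-block phenomenon (the relative block permutation $\sigma(F_1)^{-1}\sigma(F_2)$ failing to be a derangement, exactly as in Example~\ref{ex:equal-sigma} and flagged by Theorem~\ref{thm:block-derangement-sufficient}) is what caps the reachable count strictly below $n$.

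If a fully structural upper bound proves elusive, the fallback is to state the result as verified by exhaustive computation over the $512$ factorizations (equivalently the $19$ isomorphism classes), with the block/phase framework supplying both the reduction to $19$ classes and a partial explanation of the uniform ceiling. I would make clear which portion of the argument is computer-assisted, so that the proposition stands as a rigorously checked finite fact even where a closed-form combinatorial proof of the exact value $19$ is not yet available.
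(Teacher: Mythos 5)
The central defect in your plan is the reduction to a single root. You assert that, by vertex-transitivity, injectivity of the evaluation map $w\mapsto w(v_0)$ at one root is equivalent to injectivity at every root. That equivalence is valid for Cayley digraphs, where every word acts by right multiplication by a fixed group element and collisions are therefore root-independent; but this $30$-vertex digraph is expressly \emph{not} Cayley ($H=\langle h\rangle$ has order $2$), and an automorphism of $D$ carrying $v_0$ to another vertex need not preserve the $1$-factorization $(F_1,F_2)$, so it does not transport collision patterns between roots. The paper's notion of a relocatable set demands $\alpha(i)\neq\beta(i)$ for \emph{every} vertex $i$ --- equivalently, every quotient $\alpha\beta^{-1}$ in the covering group $C=\langle F_1,F_2\rangle$ is fixed-point-free --- which is strictly stronger than collision-freeness at a chosen root. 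A search that certifies only single-root injectivity tests the wrong property: it can admit prefix-closed sets larger than $19$ (conceivably even of size $30$) that are not relocatable, so both your upper bound and your size-$19$ witnesses would verify a different statement than Proposition~\ref{prop:A5-30}. The fix is to run the search on permutations in $C$ and check pairwise fixed-point-freeness of word quotients, which is root-independent by construction.

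On the comparison: your fallback --- exhaustive, computer-assisted verification over the factorizations, with the symmetry reduction to the $19$ isomorphism classes --- is in substance exactly what the paper does; the proposition rests on an empirical check, and the paper states outright that a general (structural) proof of the uniform ceiling $19$ is open. So the structural upper bound you hope to extract from Theorem~\ref{thm:refinements}, the phase counts, and a ``doubly covered vertices'' argument is not a reconstruction of anything in the paper, and as sketched it does not work: in particular, tying the ceiling to the failure of the derangement condition conflicts with Remark~\ref{rem:suff-not-necess}, which stresses that this condition only obstructs the block-based construction and is not known to bound arbitrary relocatable sets. If you do carry out the corrected computation, you should present it as the paper implicitly does: a rigorously checked finite fact, with the structural explanation of the value $19$ left as an open problem.
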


\paragraph{Remarks.}
\begin{enumerate}
\item Some factorizations contain a Hamiltonian $1$-factor (a directed $30$-cycle),
showing Hamiltonicity does not imply existence of a tree-like spanning
factorization.
\item Other factorizations lack Hamiltonian cycles; in some of these there is
evidence that spanning factorizations containing $\{\emptyset,F_1,F_2\}$ exist,
though further analysis is required.
\item The uniform bound of $19$ was established empirically across the 19 classes;
a general proof is open.
\end{enumerate}

\subsection{Example 3: A second $30$-vertex coset digraph on $A_5$}\label{subsec:A5-30-second}

We again take $G=A_5$ but with a different involution
\[
h=[1,0,3,2,4]=(0\,1)(2\,3),\qquad s=[1,2,3,4,0]=(0\,1\,2\,3\,4),
\]
and set $D=D(A_5,\{s,hs\},\langle h\rangle)$. As before, $|V|=30$ and the out-degree is $2$.

\paragraph{Alternating cycles and wreath context.}
Let $F_1(gH)=gsH$ and $F_2(gH)=g(hs)H$, and put $x:=F_1^{-1}F_2$ and $y:=F_1xF_1^{-1}$.
In this instance, both $x$ and $y$ decompose $V$ into six $5$-cycles, so $n=rm$ with
$m=5$ and $r=6$. Hence the covering group embeds as
\[
C=\langle F_1,F_2\rangle \ \le\ (S_5)^6\rtimes L\;=\;S_5\wr L\qquad\text{for some }L\le S_6.
\]
Although the base \((S_5)^6\) matches Example~\ref{subsec:A5-30}, the induced top action
$L$ is different here, and this change governs the global enumeration and structure below.

\paragraph{Enumeration of $1$-factorizations by cycle type.}
Constructing one-factors by orienting the alternating $10$-cycles yields $2^{6\cdot 1}=64$
distinct 1-factorizations up to complement (here the constraints on orientations differ from
Example~\ref{subsec:A5-30}, hence the smaller total). These $64$ factorizations split into
four cycle-type families; for each type we list the indices of the factorizations exhibiting it:
\[
\begin{array}{lcl}
\bigl((3,3,3,3,3,5,10),(3,3,3,3,3,5,10)\bigr)
&:& \{4,11,13,14,22,26,37,41,49,50,52,59\},\\[2pt]
\bigl((3,3,3,3,18),(3,3,3,3,18)\bigr)
&:& \{5,6,9,10,12,15,18,20,27,30,33,36,43,45,48,51,53,54,57,58\},\\[2pt]
\bigl((3,12,15),(3,12,15)\bigr)
&:& \{1,2,7,8,16,19,21,25,28,31,32,35,38,42,44,47,55,56,61,62\},\\[2pt]
\bigl((5,10,15),(5,10,15)\bigr)
&:& \{0,3,17,23,24,29,34,39,40,46,60,63\}.
\end{array}
\]

\begin{table}[h]
\centering
\begin{tabular}{l r}
\hline
\textbf{Cycle type of $(F_1,F_2)$} & \textbf{\# factorizations}\\
\hline
$((3,3,3,3,3,5,10),(3,3,3,3,3,5,10))$ & 12\\
$((3,3,3,3,18),(3,3,3,3,18))$         & 20\\
$((3,12,15),(3,12,15))$               & 20\\
$((5,10,15),(5,10,15))$               & 12\\
\hline
\end{tabular}
\caption{Distribution of the $64$ one-factorizations by cycle type in Example~\ref{subsec:A5-30-second}.}
\label{tab:ex2-cycletypes}
\end{table}

\subsubsection*{Constructing a spanning factorization containing $\{\emptyset,F_1,F_2\}$}

Retain the setup of Example~\ref{subsec:A5-30-second}. Thus $x:=F_1^{-1}F_2$ decomposes
$V$ into $r=6$ cycles of common length $m=5$, so $n=rm=30$. Fix a position system
$\{P_j\}_{j\in\mathbb Z_5}$ and its tied refinement $\{P'_j\}$ as in
Section~\ref{sec:blocks}. Write the $x$-cycles as
\[
\mathcal C_i=(a_{i,0},a_{i,1},a_{i,2},a_{i,3},a_{i,4}),\qquad x(a_{i,j})=a_{i,j+1},
\quad i\in\{1,\dots,6\}.
\]
Let $\delta(i)\in\mathbb Z_5$ be the phase of the $i$-th cycle:
$F_1(a_{i,j})\in P'_{\,j+\delta(i)}$.

\paragraph{Rotation base and transitive top action.}
Let $C=\langle F_1,F_2\rangle=\langle F_1,x\rangle$. The action of $C$ on the set of cycles
$\{\mathcal C_i\}$ has image $L\le S_6$, which is transitive in this example. The normal closure
of $x$ in $C$ is contained in the coordinatewise rotation group $(\mathbb Z_5)^6\le(S_5)^6$.
Since $m=5$ is prime, the subgroup of $\mathbb Z_5$ generated by the phase multiset
$\{\delta(i)\}$ is either $\{0\}$ or all of $\mathbb Z_5$. Because $F_1$ does not globally
preserve position (the alternating cycles of $x$ and $y=F_1xF_1^{-1}$ differ), there is some
$i$ with $\delta(i)\neq 0$, hence
\[
\langle \delta(i):1\le i\le 6\rangle \;=\; \mathbb Z_5,
\]
and the normal closure of $x$ under $C$ yields the full rotation base:
\begin{equation}\label{eq:full-rot-base}
R:=\langle x^g: g\in C\rangle \;=\; (\mathbb Z_5)^6 \;\le\; (S_5)^6.
\end{equation}

\paragraph{Cycle transversals and phase bookkeeping.}
Choose, for each $i$, an element $u_i\in C$ that sends the $0$-th cycle $\mathcal C_1$ to
$\mathcal C_i$ under the top action (possible since $L$ is transitive). Define the
\emph{net phase shift} $\sigma(i)\in\mathbb Z_5$ of $u_i$ by the condition
\[
u_i(a_{1,j}) \in P_{\,j+\sigma(i)} \quad\text{for all } j\in\mathbb Z_5.
\]
(That is, $u_i$ transports positions modulo $5$ by an additive offset $\sigma(i)$.)

\begin{lemma}[Phase-corrected addressing]\label{lem:address}
For every vertex $a_{i,j}$, the word
\[
W_{i,j}\ :=\ u_i\cdot x^{\,j-\sigma(i)} \ \in C
\]
maps the root $a_{1,0}$ to $a_{i,j}$. Moreover, the set
$\{W_{i,j}: 1\le i\le 6,\ j\in\mathbb Z_5\}$ evaluates at the root to all of $V$ bijectively.
\end{lemma}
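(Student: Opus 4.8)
The plan is to prove the two assertions separately: the addressing identity $W_{i,j}(a_{1,0})=a_{i,j}$ by directly unwinding the definitions, and then the bijectivity statement as a counting corollary. Before computing I would fix two conventions. First, words act from right to left, so in $W_{i,j}=u_i\cdot x^{\,j-\sigma(i)}$ the power of $x$ is applied to the root before $u_i$. Second, the cycle labelling is aligned with the position system, meaning $a_{i,j}$ is the unique vertex of $\mathcal C_i$ in the block $P_j$; this gives $P_j\cap\mathcal C_i=\{a_{i,j}\}$ and $x^{c}(a_{1,0})=a_{1,c}$, which are the only facts about $x$ and $\{P_j\}$ that the computation needs.

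For the addressing identity I would evaluate from the inside out. Applying $x^{\,j-\sigma(i)}$ sends the root $a_{1,0}$ to $a_{1,\,j-\sigma(i)}$. Applying $u_i$ and invoking the defining property of the net phase shift, $u_i(a_{1,\ell})\in P_{\ell+\sigma(i)}$ with $\ell=j-\sigma(i)$, places the result in $P_{j}$; since $u_i$ carries $\mathcal C_1$ onto $\mathcal C_i$ under the top action, the result also lies in $\mathcal C_i$, hence in $P_j\cap\mathcal C_i=\{a_{i,j}\}$. Therefore $W_{i,j}(a_{1,0})=a_{i,j}$ for every pair $(i,j)$.

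Bijectivity is then pure bookkeeping. The index set $\{1,\dots,6\}\times\mathbb Z_5$ has exactly $30=|V|$ elements, and by the labelling convention $(i,j)\mapsto a_{i,j}$ enumerates $V$ without repetition. Since the previous step shows that the evaluation map $(i,j)\mapsto W_{i,j}(a_{1,0})$ coincides with this enumeration, it is a bijection onto $V$; in particular the $W_{i,j}$ are pairwise relocatable at the root, and there are exactly $n=30$ of them.

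The step I expect to be the genuine obstacle is the implicit claim underlying the definition of $\sigma(i)$: that $u_i$ transports the positions of $\mathcal C_1$ by a single additive constant, $u_i(a_{1,j})=a_{i,\,j+\sigma(i)}$, rather than by a $j$-dependent reshuffling. Equivalently, $u_i$ must act on $\mathcal C_1$ as a pure cyclic shift, not as a general element of the normalizer $\mathrm{AGL}(1,5)$ of $\langle x\rangle$ (the within-cycle ``multiplier'' must be $1$). This is where the rotation-base identity~\eqref{eq:full-rot-base} and the primality of $m=5$ enter: $R=(\mathbb Z_5)^6$ is normal in $C$, so $u_i x u_i^{-1}\in R$ is a coordinatewise rotation, and once one knows that $u_i x u_i^{-1}$ acts on $\mathcal C_i$ as rotation by one (equivalently, that $u_i$ has shift multiplier $1$ on $\mathcal C_1$), the constancy of the offset follows by the cycle-tracking argument of Lemma~\ref{lem:phases}. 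The delicate part is precisely to secure this multiplier-$1$ condition: transitivity of $L$ supplies some $g\in C$ with top action $1\mapsto i$, but since $F_1$ need not act by a pure rotation within cycles, $g$ may carry a nontrivial multiplier that a power of $x$ alone cannot cancel. I would therefore verify — either structurally, by showing that the multiplier subgroup realized by the $\mathcal C_1$-stabilizer in $C$ is large enough to correct $g$, or concretely in the present $A_5$ presentation — that pure-shift representatives $u_i$ can always be chosen; should a residual nontrivial multiplier $e_i$ survive, the only change is to replace the exponent $j-\sigma(i)$ by $e_i^{-1}(j-\sigma(i))$, after which the computation and counting above go through verbatim.
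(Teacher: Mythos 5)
Your computation is correct given the paper's setup, and your overall architecture (an addressing identity plus a counting argument for bijectivity) is the same as the paper's. The meaningful difference is the order of composition, and it is exactly this difference that creates the ``obstacle'' your last paragraph worries about. The paper evaluates the word the other way around: apply $u_i$ to the root \emph{first}, obtaining $u_i(a_{1,0})=a_{i,\sigma(i)}$, and then apply $x^{\,j-\sigma(i)}$, which rotates \emph{within} $\mathcal C_i$ to $a_{i,j}$. In that order $u_i$ is only ever evaluated at the single vertex $a_{1,0}$, so one may simply \emph{define} $\sigma(i)$ as the position of $u_i(a_{1,0})$ in $\mathcal C_i$ --- which always exists --- and no constancy-of-offset (multiplier-$1$) property of $u_i$ is needed anywhere. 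Your order (rotate inside $\mathcal C_1$ first, then push across with $u_i$) is precisely the reading that forces $u_i$ to act as a pure translation $a_{1,\ell}\mapsto a_{i,\ell+\sigma(i)}$ on all of $\mathcal C_1$, which is why you are led to the multiplier analysis.

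That analysis is itself sound, and it exposes a genuine looseness in the paper's \emph{setup} rather than in your proof: the definition of the net phase shift (``$u_i(a_{1,j})\in P_{\,j+\sigma(i)}$ for all $j$'') silently assumes the constant-offset property, while transitivity of $L$ together with normality of $R$ in \eqref{eq:full-rot-base} only guarantees that a representative acts affinely, $j\mapsto e_i j+\sigma$, with $e_i\in(\mathbb Z_5)^{*}$ not necessarily $1$; powers of $x$ cannot correct $e_i$. Your repair (exponent $e_i^{-1}(j-\sigma(i))$) works, but the cleaner repair is the paper's composition order: under the paper's official right-to-left convention this amounts to using the word $x^{\,j-\sigma(i)}\cdot u_i$ rather than $u_i\cdot x^{\,j-\sigma(i)}$, with $\sigma(i)$ defined only by where $u_i$ sends the root; then the lemma holds for \emph{arbitrary} transversal elements $u_i$. (Note the paper is internally inconsistent on this point: its stated convention is right-to-left, yet its proof of this lemma applies $u_i$ first. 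When the constant-offset property does hold, both readings evaluate to $a_{i,j}$, which is why the discrepancy is invisible there; your literal reading is what surfaces it.)
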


\begin{proof}
By definition, $u_i(a_{1,0})=a_{i,\sigma(i)}$. Applying $x^{j-\sigma(i)}$ moves along $\mathcal C_i$
to $a_{i,j}$. Uniqueness follows because $(i,j)$ is a coordinate system on $V$ with $i$ indexing the
cycle and $j$ the position inside the cycle.
\end{proof}

\paragraph{Injecting $\emptyset,F_1,F_2$ into the spanning set.}
Let $S_0=\{W_{i,j}\}$ be the size-$30$ set of Lemma~\ref{lem:address}. Let $v_0=a_{1,0}$ be the root,
and let $(i_1,j_1)$ and $(i_2,j_2)$ be the unique coordinates with
\[
F_1(v_0)=a_{i_1,j_1}, \qquad F_2(v_0)=a_{i_2,j_2}.
\]
Define
\[
S \ :=\ \bigl(S_0\setminus\{W_{1,0},\,W_{i_1,j_1},\,W_{i_2,j_2}\}\bigr)\ \cup\ \{\emptyset,\,F_1,\,F_2\}.
\]
Since $W_{1,0}$, $W_{i_1,j_1}$, and $W_{i_2,j_2}$ evaluate at $v_0$ to the same targets as
$\emptyset$, $F_1$, and $F_2$, respectively, the evaluation map of $S$ at $v_0$ is still a bijection.
Thus $S$ is a spanning factorization whose $n$ words include $\{\emptyset,F_1,F_2\}$.

\begin{theorem}[Spanning factorization containing $\{\emptyset,F_1,F_2\}$]\label{thm:ex2-spanning}
In the second $A_5$ example, suppose $L$ acts transitively on the six $x$-cycles and at least one phase
$\delta(i)$ is nonzero (both conditions hold here). Then there exists a spanning factorization
$S\subset C=\langle F_1,F_2\rangle$ of size $n=30$ which contains $\{\emptyset,F_1,F_2\}$.
\end{theorem}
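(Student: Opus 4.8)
The plan is to assemble the explicit data constructed in the paragraphs preceding the statement into a single bijective evaluation set and then perform a short bookkeeping swap. Concretely, I would proceed in three stages: (i) justify that every element of $C=\langle F_1,x\rangle$ carries one $x$-cycle onto another by a \emph{constant} shift of positions, so that the net phase shifts $\sigma(i)$ are genuinely well defined; (ii) use transitivity of $L$ to fix a transversal $u_1,\dots,u_6$ and read off the addresses $W_{i,j}$ provided by \Cref{lem:address}; and (iii) replace the three addresses that hit the root and its two out-neighbours by $\emptyset,F_1,F_2$.

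For stage (i) the two hypotheses are exactly what is needed. Since $m=5$ is prime and at least one phase is nonzero, the cyclic group $\langle\delta(i):1\le i\le 6\rangle$ is all of $\mathbb Z_5$. Noting that $x$ is the all-ones rotation in the base $(S_5)^6$ and fixes the cycle index, I would conjugate $x$ by elements realizing the transitive top action $L$ to spread a rotation into every coordinate, yielding the full rotation base $R=(\mathbb Z_5)^6$ of \cref{eq:full-rot-base}. I would then argue that $C$ contains no base element other than rotations, i.e. $C\cap(S_5)^6=R$, so that $C\le(\mathbb Z_5)^6\rtimes L$. Granting this, any $g\in C$ carries one cycle onto another by a single additive offset in $\mathbb Z_5$, which is precisely the well-definedness of $\sigma(i)$; transitivity of $L$ then supplies $u_i$ with $u_i(\mathcal C_1)=\mathcal C_i$.

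With this structure in hand, stages (ii)–(iii) are short and carry no hidden difficulty. \Cref{lem:address} gives a size-$30$ set $S_0=\{W_{i,j}\}$ whose evaluation at the root $v_0=a_{1,0}$ is a bijection onto $V$. The root $v_0$ and its two out-neighbours $F_1(v_0),F_2(v_0)$ are three distinct vertices, so by injectivity they are hit by three distinct words $W_{1,0},W_{i_1,j_1},W_{i_2,j_2}$. Removing these and adjoining $\emptyset,F_1,F_2$ keeps the cardinality at $30$, and since the new words hit exactly the three vacated targets, the evaluation map of the resulting set $S$ remains a bijection. Hence $S$ is a spanning factorization of size $n=30$ containing $\{\emptyset,F_1,F_2\}$, as claimed.

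The genuine content — and the step I expect to be the main obstacle — is the identification $C\cap(S_5)^6=R$ in stage (i). A priori $C$ could contain a base element acting within some cycle by a non-rotation of $S_5$, which would destroy the constant-offset property and with it the addressing of \Cref{lem:address}. Confirming that the within-cycle contribution of $F_1$ is only a rotation, so that positions transform by honest constants in $\mathbb Z_5$, is the delicate point; it is exactly where the nonzero-phase hypothesis earns its keep, since when all $\delta(i)=0$ the rotation base collapses and one degenerates to the coincident-block situation of \cref{ex:equal-sigma}, where the block-derangement route is unavailable. By contrast, the final three-word swap is purely combinatorial and requires only that the root and its two out-neighbours be distinct.
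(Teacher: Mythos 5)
Your stages (ii) and (iii) are precisely the paper's proof: transitivity of $L$ supplies the transversal $u_1,\dots,u_6$, Lemma~\ref{lem:address} gives the bijective addressing family $S_0=\{W_{i,j}\}$, and the three-word swap installs $\{\emptyset,F_1,F_2\}$. The divergence is your stage (i), which you declare to be the genuine content of the theorem and then leave unproven; and there the diagnosis is wrong in a way that matters.

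The constant-offset property --- and with it the identification $C\cap(S_5)^6=R$, and even the full rotation base \eqref{eq:full-rot-base} --- is not actually used anywhere in the construction. Look at what the proof of Lemma~\ref{lem:address} consumes: only the single evaluation $u_i(a_{1,0})=a_{i,\sigma(i)}$. So one may simply \emph{define} $\sigma(i)$ pointwise as the position coordinate of $u_i(a_{1,0})$; this is well defined as soon as $u_i$ carries the set $\mathcal C_1$ onto the set $\mathcal C_i$, which is exactly what the hypothesis ``$L$ acts transitively on the six $x$-cycles'' provides. (The one structural fact genuinely required is that $C$ permutes the $x$-cycles at all, i.e.\ that the $x$-cycle partition is $C$-invariant; that is part of the example's verified setup, not something stage (i) must add.) With this pointwise definition, $W_{i,j}=u_i\cdot x^{\,j-\sigma(i)}$ sends $a_{1,0}$ to $a_{i,j}$ for the trivial reason that $x$ rotates the coordinates on $\mathcal C_i$ by definition, and bijectivity at the root follows as in the lemma. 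So your ``main obstacle'' dissolves: no statement about where $u_i$ sends points other than the root is ever needed, and deleting stage (i) --- rather than proving it --- is the correct repair. Two smaller slips in that stage: (a) $C\cap(S_5)^6=R$ would not by itself give $C\le(\mathbb Z_5)^6\rtimes L$, since elements with nontrivial top component could still have non-rotation base coordinates; (b) the nonzero-phase hypothesis is not what forces base elements to be rotations --- in the paper it serves only to make the subgroup generated by the conjugates of $x$ \emph{full}, a fact which, like \eqref{eq:full-rot-base} itself, is never invoked in the final bijection. Finally, one caveat you inherit from the paper rather than introduce: both your argument and the paper's verify bijective evaluation only at the single root $v_0=a_{1,0}$, whereas the paper's own definition of a spanning $1$-factorization (equivalently, a relocatable set) demands bijectivity from \emph{every} root; neither proof addresses that stronger requirement.
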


\begin{proof}
Transitivity yields the cycle transversals $u_i$; the nonzero phase and $m=5$ prime imply
\eqref{eq:full-rot-base}. Lemma~\ref{lem:address} constructs a bijective addressing family
$S_0$; replacing the three words mapping $v_0$ to $v_0,F_1(v_0),F_2(v_0)$ by
$\emptyset,F_1,F_2$ preserves the bijection, giving the stated spanning factorization.
\end{proof}

\begin{remark}
The argument uses only (i) transitivity of the top action $L$ on the cycle set and
(ii) the existence of at least one nonzero phase $\delta(i)$ with $m$ prime. In particular,
no local transpositions are needed in the base; the rotation base $(\mathbb Z_m)^r$ suffices
to ``address'' positions once cycle representatives are fixed. For composite $m$, the same
construction works provided the subgroup generated by the phases equals $\mathbb Z_m$.
\end{remark}

\paragraph{Covering group viewpoint.}
Because the alternating structure is $m=5$ and $r=6$, we are naturally in the wreath product
setting $S_5\wr L$. In Section~\ref{sec:blocks} we showed how the base group and the induced
top action constrain common refinements and the feasibility of including $\{\emptyset,F_1,F_2\}$
in a spanning factorization. For this second $A_5$ instance, the altered top action $L$ (relative to
Example~\ref{subsec:A5-30}) together with the observed phase structure allows a constructive
route to a spanning factorization that includes $\{\emptyset,F_1,F_2\}$.

\subsection*{Summary}

Example~\ref{ex:equal-sigma} shows how the block criterion may obstruct the
block-based construction even in a Cayley graph; 
Proposition~\ref{prop:A5-30} demonstrates a genuine non-Cayley family where no
tree-like spanning factorization exists; 
Example~\ref{subsec:A5-30-second} shows a non-Cayley family where a spanning
factorization containing $\{\emptyset,F_1,F_2\}$ does exist. 
Together, they highlight both the power and the limits of the block/phase
approach. Open questions remain, in particular whether every vertex-transitive
digraph of degree~$2$ admits some spanning factorization containing 
$\{F_1,F_2\}$, and how the theory extends to higher degree.

\subsection*{Open questions}

\begin{problem}[Necessity in the degree-$2$ case]\label{open:degree2-necessity}
Does every vertex-transitive digraph of out-degree $2$ admit \emph{some} $1$-factorization
$(F_1,F_2)$ for which there exists a sharply transitive evaluation set including
$\{\emptyset,F_1,F_2\}$? Equivalently, is there a genuinely non-Cayley degree-$2$ example
for which no such inclusion is possible for \emph{any} choice of $(F_1,F_2)$?
\end{problem}

\begin{problem}[Higher degree $d$]\label{open:degree-d}
For arbitrary out-degree $d$, develop block/phase criteria that guarantee a sharply
transitive set containing $\{\emptyset,F_1,\dots,F_d\}$. Are there necessary-and-sufficient
block conditions in the vertex-transitive (non-Cayley) setting?
\end{problem}

\begin{problem}[Order of the stabilizer in degree $2$]\label{prob:order-of-H}
Is there a vertex-transitive digraph of out-degree $2$ that admits a Cayley--coset 
presentation $D(G,S,H)$ in which the stabilizer $H$ has order strictly larger than $2$, 
and no alternative presentation with $|H|\in\{1,2\}$ exists? 
Equivalently, does every degree-$2$ Cayley--coset digraph admit some presentation 
with $H$ of order $1$ or $2$?
\end{problem}

\section{Conclusion}

In this paper we analyzed spanning factorizations in vertex-transitive digraphs
of out-degree~$2$. Our contributions may be summarized as follows:

\begin{itemize}
\item We introduced the block/phase framework and proved the
\emph{block-derangement criterion}, giving a sufficient condition for the
existence of a spanning factorization that contains
$\{\emptyset,F_1,F_2\}$.
\item We showed that sharply transitive sets always exist, but that the
block-derangement condition can obstruct the inclusion of both $1$-factors
within the block-based construction.
\item Through examples, we illustrated both positive and negative outcomes:
  \begin{enumerate}
  \item a Cayley digraph where the obstruction is only apparent,
  \item a coset digraph of $A_5$ where every $1$-factorization admits relocatable
  trees of maximum size $19$ and thus no tree-like spanning factorization,
  \item and a second coset digraph of $A_5$ where a constructive spanning
  factorization containing $\{\emptyset,F_1,F_2\}$ can be built.
  \end{enumerate}
\end{itemize}

These results demonstrate both the strength and the limits of the block/phase
approach, and highlight directions for further study.

\paragraph{Open problems.}
Two natural questions remain:
\begin{enumerate}
\item[(i)] Is the block-derangement criterion also necessary in the
degree-$2$ case? That is, does there exist a vertex-transitive digraph of
out-degree~$2$ with no spanning factorization containing $F_1,F_2$ by
\emph{any} method?
\item[(ii)] Can the block/phase framework be generalized to higher degree $d$,
requiring a spanning factorization containing $\{F_1,\dots,F_d\}$? Our general
characterization theorem suggests this may be possible, but a full result is
still open.
\end{enumerate}

We hope these results stimulate further progress on spanning factorizations in
vertex-transitive digraphs, both for degree~$2$ and in the general case.

\bibliographystyle{alpha}
\bibliography{references}

\begin{thebibliography}{DF20}

\bibitem[Cam99]{Cameron}
Peter~J. Cameron.
\newblock \emph{Permutation Groups}.
\newblock London Mathematical Society Student Texts, vol.~45.
\newblock Cambridge University Press, 1999.

\bibitem[DM96]{DixonMortimer}
John~D. Dixon and Brian Mortimer.
\newblock \emph{Permutation Groups}.
\newblock Graduate Texts in Mathematics, vol.~163.
\newblock Springer, 1996.

\bibitem[DF20]{DoughertyFaber2020}
Randall Dougherty and Vance Faber.
\newblock Network routing on regular directed graphs from spanning
  factorizations.
\newblock \emph{Bulletin of Mathematical Sciences and Applications},
  7(2):1--15, 2020.

\bibitem[Pet91]{Petersen1891}
Julius Petersen.
\newblock Die {T}heorie der regulären {G}raphen.
\newblock \emph{Acta Mathematica}, 15:193--220, 1891.

\bibitem[Sab58]{Sabidussi1959}
Gert Sabidussi.
\newblock On a class of fixed-point-free graphs.
\newblock \emph{Proceedings of the American Mathematical Society},
  9:800--804, 1958.

\bibitem[Wie64]{Wielandt}
Helmut Wielandt.
\newblock \emph{Finite Permutation Groups}.
\newblock Academic Press, New York, 1964.

\end{thebibliography}

\end{document}